%When Leibniz Algs Nijenhuis
%\documentclass[a4paper,10pt]{article}
\documentclass[12pt, reqno]{amsart}
\usepackage{}
\usepackage{stmaryrd}
\usepackage{amsmath}
\usepackage{amsfonts}
\usepackage{amssymb}
\usepackage[all,cmtip]{xy}           %xypic macro for latex2.09
\usepackage{xspace}
\usepackage{bbding}
\usepackage{txfonts}
\usepackage[shortlabels]{enumitem}
\usepackage{cite}
\usepackage{longtable}
\usepackage{makecell}
\usepackage{tikz}
\usepackage{titletoc}

\usepackage{ifpdf}
\ifpdf
 \usepackage[colorlinks,final,hyperindex]{hyperref}
\else
 \usepackage[colorlinks,final,backref=page,hyperindex,hypertex]{hyperref}
\fi
%,backref=page
\usepackage[active]{srcltx}

%======================================================================
    %was    1, 1.5 for double sp
%======================================================================
%%standard setting
%\topmargin -0.3truein \textheight 8.4truein
%\oddsidemargin 0.2truein
%\evensidemargin 0.2truein \textwidth 440pt
%======================================================================
%%little larger standard setting: good setting
\topmargin -.8cm \textheight 22.8cm \oddsidemargin 0cm \evensidemargin -0cm \textwidth 16.3cm
%======================================================================
%%A4 tall standard setting: good setting
%\topmargin -1.5cm \textheight 25cm \oddsidemargin 0cm \evensidemargin -0cm \textwidth 16cm
%=======================================================================
%%surface setting, no margin
%\topmargin -.8cm \textheight 21.5cm \oddsidemargin -1.5cm \textwidth 19cm \evensidemargin -1.5cm
%==============================================================================
%%texstudio standard full page:
%\topmargin -3.3cm \textheight 22.4cm \oddsidemargin 0cm \evensidemargin 0cm \textwidth 16.3cm
%=========================================================================================
%%texstudio tall full page for continuous text:
%\topmargin -3.3cm \textheight 27.8cm \oddsidemargin 0cm \evensidemargin 0cm \textwidth 16.3cm
%=========================================================================================
%%wide narrow margin for continuous presentation
%\topmargin -3.3cm \textheight 27.3cm \oddsidemargin -2.25cm \evensidemargin -2.25cm \textwidth 20.9cm
%=========================================================================================

%使用\big系列标签 这是一系列标签，包括\big，\Big，\bigg，\Bigg。按着顺序，它们控制的括号不断变大。

%%%%%%%%%%%%%%

\makeatletter

\begin{document}
\newcommand {\emptycomment}[1]{} %to remove paragraphs

\baselineskip=15pt
\newcommand{\nc}{\newcommand}
\newcommand{\delete}[1]{}
\nc{\mfootnote}[1]{\footnote{#1}} % Use this to show footnotes
\nc{\todo}[1]{\tred{To do:} #1}

%\delete{
\nc{\mlabel}[1]{\label{#1}}  % Use this to suppress names
\nc{\mcite}[1]{\cite{#1}}  % Use this to suppress names
\nc{\mref}[1]{\ref{#1}}  % Use this to suppress names
\nc{\meqref}[1]{\eqref{#1}} % Use this to suppress names
\nc{\mbibitem}[1]{\bibitem{#1}} % Use this to show number
%}

\delete{
\nc{\mlabel}[1]{\label{#1}  % Use the next two lines to show names
{\hfill \hspace{1cm}{\bf{{\ }\hfill(#1)}}}}
\nc{\mcite}[1]{\cite{#1}{{\bf{{\ }(#1)}}}}  % Use this lines to show names
\nc{\mref}[1]{\ref{#1}{{\bf{{\ }(#1)}}}}  % Use this lines to show names
\nc{\meqref}[1]{\eqref{#1}{{\bf{{\ }(#1)}}}} % Use this lines to show names
\nc{\mbibitem}[1]{\bibitem[\bf #1]{#1}} % Use this to show name
}

\newcommand {\comment}[1]{{\marginpar{*}\scriptsize\textbf{Comments:} #1}}
\nc{\mrm}[1]{{\rm #1}}
\nc{\id}{\mrm{id}}  \nc{\Id}{\mrm{Id}}
\nc{\admset}{\{\pm x\}\cup (-x+K^{\times}) \cup K^{\times} x^{-1}}
%%%%%%%%%%%%%%%%%%%%%%%%

\def\a{\alpha}
\def\admt{admissible to~}
\def\ad{associative D-}
\def\asi{ASI~}
\def\aybe{aYBe~}
\def\b{\beta}
\def\bd{\boxdot}
\def\bbf{\overline{f}}
\def\bF{\overline{F}}
\def\bbF{\overline{\overline{F}}}
\def\bbbf{\overline{\overline{f}}}
\def\bg{\overline{g}}
\def\bG{\overline{G}}
\def\bbG{\overline{\overline{G}}}
\def\bbg{\overline{\overline{g}}}
\def\bT{\overline{T}}
\def\bt{\overline{t}}
\def\bbT{\overline{\overline{T}}}
\def\bbt{\overline{\overline{t}}}
\def\bR{\overline{R}}
\def\br{\overline{r}}
\def\bbR{\overline{\overline{R}}}
\def\bbr{\overline{\overline{r}}}
\def\bu{\overline{u}}
\def\bU{\overline{U}}
\def\bbU{\overline{\overline{U}}}
\def\bbu{\overline{\overline{u}}}
\def\bw{\overline{w}}
\def\bW{\overline{W}}
\def\bbW{\overline{\overline{W}}}
\def\bbw{\overline{\overline{w}}}
\def\btl{\blacktriangleright}
\def\btr{\blacktriangleleft}
\def\calo{\mathcal{O}}
\def\ci{\circ}
\def\d{\delta}
\def\dd{\diamondsuit}
\def\D{\Delta}
\def\frakB{\mathfrak{B}}
\def\G{\Gamma}
\def\g{\gamma}
\def\k{\kappa}
\def\l{\lambda}
\def\ll{\mathfrak{L}}
\def\lh{\leftharpoonup}
\def\lr{\longrightarrow}
\def\N{Nijenhuis~}
\def\o{\otimes}
\def\om{\omega}
\def\opa{\cdot_{A}}
\def\opb{\cdot_{B}}
\def\p{\psi}
\def\sadm{$S$-admissible~}
\def\r{\rho}
\def\ra{\rightarrow}
\def\rh{\rightharpoonup}
\def\rr{r^{\#}}
\def\s{\sigma}
\def\st{\star}
\def\ti{\times}
\def\tl{\triangleright}
\def\tr{\triangleleft}
\def\v{\varepsilon}
\def\vp{\varphi}
\def\vth{\vartheta}

%%%%%%%%%%%%%%%%%%%%%%%% Statements
\newtheorem{thm}{Theorem}[section]
\newtheorem{lem}[thm]{Lemma}
\newtheorem{cor}[thm]{Corollary}
\newtheorem{pro}[thm]{Proposition}
\theoremstyle{definition}
\newtheorem{defi}[thm]{Definition}
\newtheorem{ex}[thm]{Example}
\newtheorem{rmk}[thm]{Remark}
\newtheorem{pdef}[thm]{Proposition-Definition}
\newtheorem{condition}[thm]{Condition}
\newtheorem{question}[thm]{Question}
\renewcommand{\labelenumi}{{\rm(\alph{enumi})}}
\renewcommand{\theenumi}{\alph{enumi}}

\nc{\ts}[1]{\textcolor{purple}{MTS:#1}}
\font\cyr=wncyr10

%%%%%%%%%%%%%%%%%%%%%%%%%%%%%%%%%%%%%%%%%%%%%%%%%%%%%%%%%%%%%%%%%%

\title{When Leibniz algebras are Nijenhuis?}

 \author{Haiying Li}
 \address{School of Mathematics and Information Science, Henan Normal University, Xinxiang 453007, China}
         \email{lihaiying@htu.edu.cn}

\author{Tianshui Ma\textsuperscript{*}}
\address{School of Mathematics and Information Science, Henan Normal University, Xinxiang 453007, China}
         \email{matianshui@htu.edu.cn}

 \author{Shuanhong Wang}
 \address{Shing-Tung Yau Center, School of Mathematics, Southeast University, Nanjing 210096, China}
          \email{shuanhwang@seu.edu.cn}

   \thanks{\textsuperscript{*}Corresponding author}

\date{\today}

 \begin{abstract} 
 Leibniz algebras can be seen as a ``non-commutative" analogue of Lie algebras. Nijenhuis operators on Leibniz algebras introduced by Cari\~{n}ena, Grabowski, and Marmo in [J. Phys. A: Math. Gen. 37(2004)] are (1, 1)-tensors with vanishing Nijenhuis torsion. Recently triangular Leibniz bialgebras were introduced by Tang and Sheng in [J. Noncommut. Geom. 16(2022)] via the twisting theory of twilled Leibniz algebras. In this paper we find that Leibniz algebras are very closely related to Nijenhuis operators, and prove that a triangular symplectic Leibniz bialgebra together with a dual triangular structure must possess Nijenhuis operators, which makes it possible to study the applications of Nijehhuis operators from the perspective of Leibniz algebras. At the same time, we regain the classical Leibniz Yang-Baxter equation by using the tensor form of classical $r$-matrics. At last we give the classification of triangular Leibniz bialgebras of low dimensions.
 \end{abstract}

\subjclass[2020]{
16T10,   %bialgebra
16T25,   %Yang-Baxter equation
17B38,  %Yang-Baxter equations and Rota-Baxter operators
16W99,  %rings and algebras with additional structure/none of above, but in this section
%17A30.  %algebras satisfying other identities
%16T05,  %Hopf algebras and their applications
17B62.   %Lie bialgebras, Lie coalgebras
%57R56,   %Topological quantum field theories
%81R60   %noncommutative geometry
}

\keywords{Nijenhuis operators; triangular Leibniz bialgebras; symplectic Leibniz algebras}

\maketitle

% \vspace{-1.2cm}

 \tableofcontents

%\vspace{-1cm}

\allowdisplaybreaks

\section{Introduction and preliminaries}
 A (left) {\bf Leibniz algebra} is a pair $(\ll, [,])$ consisting of a vector space $\ll$ and a bilinear map $[,]: \ll\o \ll\lr \ll$ (write $[,](x\o y)=[x, y]$) such that for all $x, y, z\in \ll$,
 \begin{eqnarray}\label{eq:l}
 [x, [y, z]]=[[x, y], z]+[y, [x, z]].
 \end{eqnarray}
 This notion was introduced by Bloh in \cite{B} under the name D-algebras and rediscovered under the name Leibniz algebras by Loday (see \cite{L,LP}), which can be seen as a ``non-commutative" analogue of Lie algebras.

 For a given algebra, a coalgebra structure can be obtained by a dual method. It is interesting and important to give a bialgebra structure on it by some compatibility conditions between the multiplication and the comultiplication, such as Lie bialgebras (\cite{Dr,Maj}), Hopf algebras (\cite{Maj}), infinitesimal Hopf algebras (\cite{Ag99}), associative D-bialgebras (or balanced infinitesimal bialgebras, antisymmetric infinitesimal bialgebras) (\cite{Zhe,Ag00b,Bai1}), Leibniz bialgebras (\cite{TS}), dendriform-Nijenhuis bialgebras (\cite{PZGL,Le}), etc. The bialgebraic structures for Rota-Baxter associative algebras were investigated by the first named author and Liu in \cite{ML} and by Bai, Guo and the first named author in \cite{BGM}. Recently, triangular Leibniz bialgebras were introduced by Tang and Sheng in \cite{TS} by using the twisting theory of twilled Leibniz algebras. The authors also presented the equivalence between matched pairs of Leibniz algebras, Manin triples of Leibniz algebras, and Leibniz bialgebras.

 A Leibniz algebra together with a \N operator is called a \N Leibniz algebra, which is introduced by Cari\~{n}ena, Grabowski and Marmo in \cite{CGM1}. Concretely, a {\bf \N Leibniz algebra} is a triple $(\ll, [,], N)$ consisting of a Leibniz algebra $(\ll, [,])$ and a \N operator $N$, i.e., a linear map $N:\ll\lr \ll$ satisfies the equality below
 \begin{eqnarray*}
 [N(x), N(y)]+N^2([x, y])=N([N(x), y])+N([x, N(y)]),\quad \forall~x, y\in \ll.
 \end{eqnarray*}
 Nijenhuis operators are very useful in the deformation theory of algebras, bialgebras theory, integrable systems, almost complex structures, Nijenhuis geometry, preHamiltonian and Hamiltonian operators, etc (see \cite{BKM22,BKM21,CGM,CGM1,CMW,CMW1,Kon,LBS,LG,LMMP7,MLo,NN,PZGL,SX1}).

 In this paper, we find a new phenomenon that Leibniz algebras are closely related to Nijenhuis operators, and prove that a triangular symplectic Leibniz bialgebra together with a dual triangular structure must be a Nijenhuis Leibniz algebra (see Theorem \ref{thm:ln}). To this end, we will introduce the notions of dual triangular Leibniz bialgebras (Definition \ref{de:cqt}) and symplectic Leibniz algebras (Definition \ref{de:sym}). We prove that a symplectic Leibniz algebra can be obtained by a dual triangular Leibniz bialgebra (see Proposition \ref{pro:de:sym}). The dual case is also presented in Theorem \ref{thm:cln}. In the meantime, we reconsider the triangular Leibniz bialgebras via the tensor form of a classical $r$-matrix to the context of Leibniz algebras and provide the necessary and sufficient conditions for $(\ll, [,], \d_r)$ to be a Leibniz bialgebra, where the coproduct $\d_r$ is defined by Eq.(\ref{eq:cop}) (see Theorem \ref{thm:tri}). At last, we give the classification of the triangular Leibniz biaglebras of dimensions 2 and 3 (see Theorems \ref{thm:d2} and \ref{thm:d3}).

 \noindent{\bf Notations:} Throughout this paper, we fix a field $K$. All vector spaces, tensor products, and linear homomorphisms are over $K$. We denote by $\id_M$ the identity map from $M$ to $M$, $\s: M\o M\lr M\o M$ by the flip map. For a vector space $A$, $r=r^1\o r^2\in A\o A$ and $r^\s=\s(r)=r^2\o r^1$.

\section{Triangular Leibniz bialgebras revisited}\label{se:tri} In this section, we reobtain the classical Leibniz Yang-Baxter equation in \cite{TS} by using Bai's method in \cite{Bai1}, i.e., the tensor form of classical $r$-matrics.

 Dual to the notion of Leibniz algebras, we have

 \begin{defi}\label{de:cl} A {\bf Leibniz coalgebra} is a pair $(\ll, \d)$ consisting of a vector space $\ll$ and a linear map $\d: \ll\lr \ll\o \ll$ (we use Sweedler notation \cite{Sw}, write $\d(x)=x_{(1)}\o x_{(2)}$) such that for all $x\in\ll$,
 \begin{eqnarray}\label{eq:cl}
 x_{(1)}\o x_{(2)(1)}\o x_{(2)(2)}=x_{(1)(1)}\o x_{(1)(2)}\o x_{(2)}+x_{(2)(1)}\o x_{(1)}\o x_{(2)(2)}.
 \end{eqnarray}
 \end{defi}

 \begin{pro}\label{pro:de:cl}\hspace{-6mm}
 \begin{enumerate}[(1)]
   \item \label{it:de:cl1} If $(\ll, \d)$ is a Leibniz coalgebra, then $(\ll^*, \d^*)$ is a Leibniz algebra, where $\d^*:\ll^*\o \ll^*\lr \ll^*$ is defined by
   $$
   \langle\d^*(\xi\o \eta), x\rangle=\langle \xi, x_{(1)}\rangle \langle \eta, x_{(2)}\rangle, \quad \forall~~\xi, \eta\in \ll^*, x\in \ll.
   $$
   \item \label{it:de:cl2} If $(\ll, [,])$ is a finite-dimensional Leibniz algebra, then $(\ll^*, [,]^*)$ is a Leibniz coalgebra, where $[,]^*:\ll^*\lr \ll^*\o \ll^*$ is defined by
   $$
   \langle[,]^*(\xi), x\o y\rangle=\langle \xi, [x, y]\rangle, \quad \forall~~\xi\in \ll^*, x, y\in \ll.
   $$
   \item \label{it:de:cl3} If $(\ll, \d)$ is a Leibniz coalgebra, then
   \begin{eqnarray}\label{eq:cl-1}
   x_{(1)(1)}\o x_{(1)(2)}\o x_{(2)}+x_{(1)(2)}\o x_{(1)(1)}\o x_{(2)}=0, \quad \forall ~x\in \ll.
   \end{eqnarray}
 \end{enumerate}
 \end{pro}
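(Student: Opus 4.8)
The plan is to prove all three parts by dualization, testing each identity against elements of $\ll$ (or of $\ll\o\ll\o\ll$) and unwinding the definitions of $\d^*$ and $[,]^*$.

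For part (1), I would verify the Leibniz identity \meqref{eq:l} for the product $[\xi,\eta]:=\d^*(\xi\o\eta)$ on $\ll^*$ by pairing each term against an arbitrary $x\in\ll$ and applying the defining formula for $\d^*$ twice. This yields
\[
\langle[\xi,[\eta,\zeta]],x\rangle=\langle\xi,x_{(1)}\rangle\langle\eta,x_{(2)(1)}\rangle\langle\zeta,x_{(2)(2)}\rangle,
\]
\[
\langle[[\xi,\eta],\zeta],x\rangle=\langle\xi,x_{(1)(1)}\rangle\langle\eta,x_{(1)(2)}\rangle\langle\zeta,x_{(2)}\rangle,
\]
\[
\langle[\eta,[\xi,\zeta]],x\rangle=\langle\xi,x_{(2)(1)}\rangle\langle\eta,x_{(1)}\rangle\langle\zeta,x_{(2)(2)}\rangle,
\]
so that the Leibniz identity, once paired further against $\xi\o\eta\o\zeta$, becomes precisely the co-Leibniz identity \meqref{eq:cl}; thus part (1) is immediate. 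Part (2) is the formal dual: finite-dimensionality supplies the canonical identifications $(\ll\o\ll)^*\cong\ll^*\o\ll^*$ and $(\ll\o\ll\o\ll)^*\cong\ll^*\o\ll^*\o\ll^*$, so that $[,]^*$ genuinely lands in $\ll^*\o\ll^*$ and is a well-defined comultiplication. I would then check \meqref{eq:cl} for it by pairing against $x\o y\o z$ and invoking \meqref{eq:l} — the very computation above read backwards.

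For part (3), the key observation is that the three terms of \meqref{eq:cl} are assembled from just two iterated coproducts, $P:=x_{(1)}\o x_{(2)(1)}\o x_{(2)(2)}$ and $Q:=x_{(1)(1)}\o x_{(1)(2)}\o x_{(2)}$, and that the last summand on the right of \meqref{eq:cl} is exactly $(\s\o\id)P$. Hence \meqref{eq:cl} reads $P=Q+(\s\o\id)P$, i.e. $Q=P-(\s\o\id)P$. Applying $\s\o\id$ and using $(\s\o\id)^2=\id$ gives $(\s\o\id)Q=(\s\o\id)P-P=-Q$, so that $Q+(\s\o\id)Q=0$, which is exactly \meqref{eq:cl-1}.

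I do not expect a serious obstacle: all three parts are bookkeeping in Sweedler notation. The only hypothesis doing real work is finite-dimensionality in part (2), needed to identify $(\ll\o\ll)^*$ with $\ll^*\o\ll^*$; and the single point requiring a moment's thought is recognizing the third term of \meqref{eq:cl} as $(\s\o\id)P$ in part (3), after which the claimed antisymmetry drops out formally.
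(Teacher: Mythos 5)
Your proposal is correct, and it is precisely the routine dualization the paper has in mind: the paper's own proof is just the word ``Straightforward,'' and your pairing argument for parts (1)--(2) together with the flip-operator manipulation $Q=P-(\s\o\id)P\Rightarrow(\s\o\id)Q=-Q$ for part (3) fills in exactly those details (part (3) being the coalgebra mirror of deriving $[[x,y],z]+[[y,x],z]=0$ from the Leibniz identity). No gaps.
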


 \begin{proof} Straightforward. \end{proof}

 \begin{rmk}\label{rmk:de:cl} Based on Proposition \ref{pro:de:cl}, for any Leibniz coalgebra, we can get a Leibniz algebra structure on the dual space. For a Leibniz algebra, under the condition of finite-dimension, one can obtain a Leibniz coalgebra structure on the dual space. These are similar to the case of associative algebras (see \cite{Sw}).
 \end{rmk}

 \begin{defi}\label{de:lb} A {\bf Leibniz bialgebra} is a triple $(\ll, [,], \d)$ consisting of a Leibniz algebra $(\ll, [,])$ and a Leibniz coalgebra $(\ll, \d)$ such that, for all $x, y\in \ll$, the following conditions hold:
 \begin{eqnarray}
 0&=&x_{(2)}\o [x_{(1)}, y]-[y_{(1)}, x]\o y_{(2)},\label{eq:b1}\\
 \d([x, y])&=&x_{(1)}\o [x_{(2)}, y]-[y, x_{(1)}]\o x_{(2)}-[x_{(1)}, y]\o x_{(2)}+x_{(2)}\o [x_{(1)}, y]\label{eq:b2} \\
 &&-[y, x_{(2)}]\o x_{(1)}-[x_{(2)}, y]\o x_{(1)}+[x, y_{(1)}]\o y_{(2)}+y_{(1)}\o [x, y_{(2)}].\nonumber
 \end{eqnarray}
 \end{defi}

 \begin{rmk} \label{rmk:de:lb} In Definition \ref{de:lb}, we replace the condition that $(\ll^*, [,]_{\ll^*})$ is a Leibniz algebra in \cite[Definition 2.13]{TS} with the condition that $(\ll, \d)$ is a Leibniz coalgebra. According to Remark \ref{rmk:de:cl}, Definition \ref{de:lb} is more general than \cite[Definition 2.13]{TS}.
 \end{rmk}

 Let $(\ll, [,])$ be a Leibniz algebra. Define the left and right multiplication $L, R: \ll\lr End(\ll)$ by $L_x(y)=[x, y]$ and $R_x(y)=[y, x]$, respectively, for all $x, y\in \ll$.

 \begin{thm} \label{thm:tri} Let $(\ll, [,])$ be a Leibniz algebra and $r\in \ll\o \ll$. Then $(\ll, [,], \d_r)$, where $\d_r$ is defined by
 \begin{equation}\label{eq:cop}
 \d(x):=\d_r(x):=-r^{1}\o [r^{2}, x]+[r^{2}, x]\o r^{1}+[x, r^{2}]\o r^{1}, \quad \forall x\in \ll,
 \end{equation}
 is a Leibniz bialgebra if and only if the following conditions hold:
 \begin{eqnarray}
 &&\hspace{25mm}(R_x\o R_y)(r-\s(r))=0,\label{eq:b1-1}\\
 &&\hspace{12mm}(L_x\o R_y+L_y\o L_x+R_y\o L_x)(r-\s(r))=0, \label{eq:b2-1}\\
 &&(\id\o L_x\o \id+\id\o R_x\o \id)(r_{12}r^{\s}_{23}+r_{13}r^{\s}_{23}-r_{12}r^{\s}_{13}-r^{\s}_{13}r_{12}) \label{eq:cl-2}\\
 &&\hspace{22mm}-(\id\o \id\o R_x)(r_{12}r_{23}+r_{13}r_{23}-r^{\s}_{12}r_{13}-r_{13}r^{\s}_{12}) \nonumber\\
 &&-(L_x\o \id\o \id+R_x\o \id\o \id)(r_{23}r^{\s}_{13}+r^{\s}_{12}r^{\s}_{13}-r^{\s}_{23}r^{\s}_{12}-r^{\s}_{12}r^{\s}_{23})=0, \nonumber
 \end{eqnarray}
 where
 \begin{eqnarray*}
 &&r_{12}r_{23}=r^1\o [r^2, \br^1]\o \br^2,~~r_{13}r_{23}=r^1\o \br^1\o [r^2, \br^2],~~ r_{23}r_{12}=r^1\o [\br^1, r^2]\o \br^2,\\
 &&r_{23}r_{13}=r^1\o \br^1\o [\br^2, r^2],~~r_{12}r_{13}=[r^1, \br^1]\o r^2\o \br^2,~~ r_{13}r_{12}=[r^1, \br^1]\o \br^2\o r^2,
 \end{eqnarray*}
 and $\br=r$.
 \end{thm}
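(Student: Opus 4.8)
The plan is to verify Definition \ref{de:lb} by translating each of its three conditions---the Leibniz coalgebra condition \eqref{eq:cl}, the compatibility \eqref{eq:b1}, and the compatibility \eqref{eq:b2}---into tensor identities for $r$, and to show these translate exactly into \eqref{eq:b1-1}, \eqref{eq:b2-1}, and \eqref{eq:cl-2} respectively. First I would establish notation: writing $r=r^1\o r^2=\br^1\o \br^2$, I substitute the coproduct formula \eqref{eq:cop} directly. The starting observation is that $\d_r(x)$ is built from $L_x,R_x$ applied to $r$; more precisely, reading off the three terms, one sees $\d_r(x)=-(\id\o R_x)(r)+(R_x\o \id)(r^\s)+(L_x\o \id)(r^\s)$ up to the correct placement of legs, so that all subsequent computations reduce to manipulating $R_x, L_x$ against the tensor legs of $r$ and $r^\s$.

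Next I would handle \eqref{eq:b1}, which is the lowest-order (one copy of $r$) condition. Plugging \eqref{eq:cop} into $x_{(2)}\o[x_{(1)},y]-[y_{(1)},x]\o y_{(2)}$ and collecting terms, everything is linear in $r$, and each resulting term has the form $R_x$ (or $R_y$) acting on one leg of $r$ or $r^\s$; the antisymmetrization $r-\s(r)$ should emerge because the two halves of \eqref{eq:b1} contribute $r$ and $r^\s$ with opposite signs. This gives \eqref{eq:b1-1}. For \eqref{eq:b2-1}, I expect it to arise similarly from combining part of the coalgebra compatibility with the single-$r$ portion of \eqref{eq:b2}: the three operators $L_x\o R_y+L_y\o L_x+R_y\o L_x$ suggest that I am matching the ``mixed'' left/right multiplication terms produced when I expand $\d_r([x,y])$ via \eqref{eq:cop} against the eight terms on the right side of \eqref{eq:b2}. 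The key algebraic input here is the Leibniz identity \eqref{eq:l}, which lets me rewrite nested brackets such as $[x,[r^2,y]]$ using $[[x,r^2],y]+[r^2,[x,y]]$; careful repeated use of \eqref{eq:l} is what converts raw expansions into the stated operator form acting on $r-\s(r)$.

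The main obstacle, and the bulk of the work, is the coassociativity-type condition \eqref{eq:cl-2}, which is quadratic in $r$ and lives in $\ll^{\o 3}$. Here I would compute $(\d_r\o\id)\d_r(x)$ and $(\id\o\d_r)\d_r(x)$ (or directly the combination appearing in \eqref{eq:cl}) by applying \eqref{eq:cop} twice; each application introduces a second copy $\br$ of $r$, producing the nine quadratic monomials $r_{ij}r_{kl}$ (and their $r^\s$ variants) listed after the theorem statement. The difficulty is bookkeeping: there are many terms, brackets are nested two deep, and one must repeatedly invoke the Leibniz identity \eqref{eq:l} to push inner multiplications outward and to recognize which $r_{ij}r_{kl}$ pattern each term matches. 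I would organize the computation by the position of the free operator $L_x$ or $R_x$---first collecting all terms where it acts on the middle leg (yielding the $(\id\o L_x\o\id+\id\o R_x\o\id)$ block), then the third leg (the $(\id\o\id\o R_x)$ block), then the first leg (the $(L_x\o\id\o\id+R_x\o\id\o\id)$ block)---and within each block match the quadratic combination of $r_{ij}$ against the prescribed expression. The ``if and only if'' then follows because each of \eqref{eq:b1-1}, \eqref{eq:b2-1}, \eqref{eq:cl-2} is obtained as an \emph{equivalent} reformulation (not merely a consequence) of \eqref{eq:b1}, \eqref{eq:b2}, \eqref{eq:cl} under the substitution \eqref{eq:cop}, so reversing each implication requires only that the translation be reversible, which it is since it is purely a rewriting using the definitions of $L,R$ and the identity \eqref{eq:l}.
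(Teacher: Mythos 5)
Your proposal takes essentially the same route as the paper's proof: substitute $\d_r$ from Eq.~(\ref{eq:cop}) into the three defining conditions (\ref{eq:b1}), (\ref{eq:b2}), (\ref{eq:cl}) of Definition~\ref{de:lb}, and show each expansion is equivalent---using the Leibniz identity (\ref{eq:l}) and its antisymmetry consequence $[[x,y],z]+[[y,x],z]=0$---to (\ref{eq:b1-1}), (\ref{eq:b2-1}), (\ref{eq:cl-2}) respectively. The paper's proof is exactly this three-step, term-by-term equivalence, so your plan (including the observation that (\ref{eq:b1}), (\ref{eq:b2}) are linear in $r$ while (\ref{eq:cl}) is quadratic and produces the $r_{ij}r_{kl}$ monomials) matches it once the bookkeeping is carried out.
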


 \begin{proof} Based on Definition \ref{de:lb}, we give the proof by the following three steps. By Eq.(\ref{eq:l}), we have
 \begin{eqnarray}\label{eq:l-1}
 [[x, y], z]+[[y, x], z]=0, \quad \forall~x, y, z\in\ll.
 \end{eqnarray}
 \begin{enumerate}[{\bf Step 1.}]
   \item \label{it:b1-1}
       \begin{eqnarray*}
       Eq.(\ref{eq:b1})&\stackrel{(\ref{eq:cop})}{\Leftrightarrow}&-[r^2, x]\o [r^1, y]+r^1\o [[r^2, x], y]+r^1\o [[x, r^2], y]\quad\stackrel{(\ref{eq:l-1})}{\Leftrightarrow}Eq.(\ref{eq:b1-1}).\\
       &&=-[r^1, x]\o [r^2, y]+[[r^2, y], x]\o r^1+[[y, r^2], x]\o r^1
       \end{eqnarray*}
   \item \label{it:b2-1}
   \begin{eqnarray*}
       Eq.(\ref{eq:b2})&\stackrel{(\ref{eq:cop})}{\Leftrightarrow}&-r^1\o [r^2, [x, y]]+[r^2, [x, y]]\o r^1+[[x, y], r^2]\o r^1+r^1\o [[r^2, x] ,y]\\
       &&-[r^2, x]\o [r^1, y]-[x, r^2]\o [r^1, y]-[y, r^1]\o [r^2, x]+[y, [r^2, x]]\o r^1\\
       &&+[y, [x, r^2]]\o r^1-[r^1, y]\o [r^2, x]+[[r^2, x], y]\o r^1+[[x, r^2], y]\o r^1\\
       &&+[r^2, x]\o [r^1, y]-r^1\o [[r^2, x] ,y]-r^1\o [[x, r^2] ,y]-[y, [r^2, x]]\o r^1\\
       &&+[y, r^1]\o [r^2, x]+[y, r^1]\o [x, r^2]-[[r^2, x], y]\o r^1+[r^1, y]\o [r^2, x]\\
       &&+[r^1, y]\o [x, r^2]+r^1\o [x, [r^2, y]]-[r^2, y]\o [x, r^1]-[y, r^2]\o [x, r^1]\\
       &&+[x, r^1]\o [r^2, y]-[x, [r^2, y]]\o r^1-[x, [y, r^2]]\o r^1=0\\
       &\stackrel{(\ref{eq:l})(\ref{eq:l-1})}{\Leftrightarrow}&Eq.(\ref{eq:b2-1}).
       \end{eqnarray*}
   \item \label{it:cl-1}
   \begin{eqnarray*}
       Eq.(\ref{eq:cl})&\stackrel{(\ref{eq:cop})}{\Leftrightarrow}&
       r^1\o \br^1\o [\br^2, [r^2, x]]-r^1\o [\br^2, [r^2, x]]\o \br^1-r^1\o [[r^2, x], \br^2]\o \br^1\\
       &&-[r^2, x]\o \br^1\o [\br^2, r^1]+[r^2, x]\o [\br^2, r^1]\o \br^1+[r^2, x]\o [r^1, \br^2]\o \br^1\\
       &&-[x, r^2]\o \br^1\o [\br^2, r^1]+[x, r^2]\o [\br^2, r^1]\o \br^1+[x, r^2]\o [r^1, \br^2]\o \br^1\\
       &&-r^1\o [r^2, \br^1]\o [\br^2, x]+[r^2, \br^1]\o r^1\o [\br^2, x]+[r^1, \br^2]\o \br^1\o [r^2, x]\\
       &&+r^1\o [r^2, [\br^2, x]]\o \br^1-[r^2, [\br^2, x]]\o r^1\o \br^1-[[r^2, x], \br^2]\o \br^1\o r^1\\
       &&+r^1\o [r^2, [x, \br^2]]\o \br^1-[r^2, [x, \br^2]]\o r^1\o \br^1-[[x, r^2], \br^2]\o \br^1\o r^1\\
       &&-r^1\o \br^1\o [r^2, [\br^2, x]]+[r^2, [\br^2, x]]\o \br^1\o r^1+[[r^2, x], \br^2]\o r^1\o \br^1\\
       &&+r^1\o [\br^2, x]\o [r^2, \br^1]-[r^2, \br^1]\o [\br^2, x]\o r^1-[r^1, \br^2]\o [r^2, x]\o \br^1\\
       &&+r^1\o [x, \br^2]\o [r^2, \br^1]-[r^2, \br^1]\o [x, \br^2]\o r^1-[r^1, \br^2]\o [x, r^2]\o \br^1=0\\
       &\stackrel{(\ref{eq:l})(\ref{eq:l-1})}{\Leftrightarrow}&
       -[r^2, x]\o \br^1\o [\br^2, r^1]+[r^2, x]\o [\br^2, r^1]\o \br^1+[r^2, x]\o [r^1, \br^2]\o \br^1\\
       &&-[x, r^2]\o \br^1\o [\br^2, r^1]+[x, r^2]\o [\br^2, r^1]\o \br^1+[x, r^2]\o [r^1, \br^2]\o \br^1\\
       &&-r^1\o [r^2, \br^1]\o [\br^2, x]+[r^2, \br^1]\o r^1\o [\br^2, x]+[r^1, \br^2]\o \br^1\o [r^2, x]\\
       &&+r^1\o [[r^2, \br^2], x]\o \br^1-[[r^2, \br^2], x]\o r^1\o \br^1+r^1\o [x, [r^2, \br^2]]\o \br^1\\
       &&-[x, [r^2, \br^2]]\o r^1\o \br^1-r^1\o \br^1\o [[r^2, \br^2], x]+r^1\o [\br^2, x]\o [r^2, \br^1]\\
       &&-[r^2, \br^1]\o [\br^2, x]\o r^1-[r^1, \br^2]\o [r^2, x]\o \br^1+r^1\o [x, \br^2]\o [r^2, \br^1]\\
       &&-[r^2, \br^1]\o [x, \br^2]\o r^1-[r^1, \br^2]\o [x, r^2]\o \br^1=0\\
       &\Leftrightarrow& Eq.(\ref{eq:cl-2}).
       \end{eqnarray*}
 \end{enumerate}
 These finish the proof.
 \end{proof}

 By Theorem \ref{thm:tri}, one can get
 \begin{cor}(\cite{TS}) \label{de:qt} Let $(\ll, [,])$ be a Leibniz algebra. If $r\in \ll\o \ll$ is a symmetric solution of the following classical Leibniz Yang-Baxter equation (cLYBe for short) in $(\ll, [,])$
 \begin{eqnarray}
 &r_{12}r_{23}+r_{13}r_{23}=r^\s_{12}r_{13}+r_{13}r^\s_{12},&\label{eq:cybe}
 \end{eqnarray}
 then $(\ll, [,], \d_r)$ is a Leibniz bialgebra, where $\d_r$ is defined by Eq.(\ref{eq:cop}). In this case, we call this Leibniz bialgebra {\bf triangular} and denoted by $(\ll, [,], r, \d_r)$.
 \end{cor}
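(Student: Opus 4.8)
The plan is to derive Corollary \ref{de:qt} as a direct specialization of Theorem \ref{thm:tri}. Since the theorem gives necessary and sufficient conditions \eqref{eq:b1-1}, \eqref{eq:b2-1}, \eqref{eq:cl-2} for $(\ll, [,], \d_r)$ to be a Leibniz bialgebra, it suffices to show that the two hypotheses of the corollary — that $r$ is symmetric (i.e. $r = \s(r)$) and that $r$ solves the cLYBe \eqref{eq:cybe} — together imply all three conditions. First I would use symmetry of $r$: the assumption $r = \s(r)$ means $r - \s(r) = 0$, so conditions \eqref{eq:b1-1} and \eqref{eq:b2-1} hold trivially, since each is obtained by applying a linear operator to $r - \s(r)$. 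This disposes of the first two conditions immediately and with no computation.

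The remaining work is to verify condition \eqref{eq:cl-2} under the two hypotheses. Here I would again exploit symmetry to replace every occurrence of $r^{\s}$ by $r$ (and every $\br^{\s}$ by $\br$) throughout the three grouped tensor expressions in \eqref{eq:cl-2}. After this substitution, I expect each of the three bracketed combinations to collapse. The middle group $r_{12}r_{23}+r_{13}r_{23}-r^{\s}_{12}r_{13}-r_{13}r^{\s}_{12}$ becomes exactly $r_{12}r_{23}+r_{13}r_{23}-r_{12}r_{13}-r_{13}r_{12}$, which is precisely the left-minus-right side of the cLYBe \eqref{eq:cybe} (written as $r_{12}r_{23}+r_{13}r_{23}-r^{\s}_{12}r_{13}-r_{13}r^{\s}_{12}$, noting $r^{\s}=r$), and therefore vanishes by hypothesis. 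The term $(\id\o\id\o R_x)$ applied to a vanishing element is then zero.

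For the other two groups I would show that, after setting $r^{\s}=r$, the combinations appearing are themselves images of the cLYBe difference under the flip maps $\s_{12}$ and $\s_{23}$ acting on the three tensor slots. Concretely, the first group $r_{12}r^{\s}_{23}+r_{13}r^{\s}_{23}-r_{12}r^{\s}_{13}-r^{\s}_{13}r_{12}$ and the third group $r_{23}r^{\s}_{13}+r^{\s}_{12}r^{\s}_{13}-r^{\s}_{23}r^{\s}_{12}-r^{\s}_{12}r^{\s}_{23}$ should each reduce, using $r=\s(r)$, to a permuted copy of $r_{12}r_{23}+r_{13}r_{23}-r_{12}r_{13}-r_{13}r_{12}$; since this element vanishes by the cLYBe, so do its permutations, and hence the entire left-hand side of \eqref{eq:cl-2} is zero. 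Thus all three conditions of Theorem \ref{thm:tri} are satisfied, and $(\ll, [,], \d_r)$ is a Leibniz bialgebra.

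The main obstacle I anticipate is the bookkeeping in the third paragraph: correctly matching the various products $r_{ij}r_{kl}$ (as defined in the theorem's displayed list) against their permuted forms once symmetry is imposed, so as to confirm that each group really is a symmetry-image of the single cLYBe combination. This requires carefully tracking which tensor leg carries which bracket under the flips $\s_{12}$ and $\s_{23}$, but it is purely mechanical once the symmetry substitution is made and introduces no genuinely new difficulty beyond Theorem \ref{thm:tri} itself.
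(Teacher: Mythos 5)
Your proposal is correct and is essentially the paper's own route: the paper obtains Corollary \ref{de:qt} precisely by specializing Theorem \ref{thm:tri}, using symmetry of $r$ to kill \eqref{eq:b1-1} and \eqref{eq:b2-1} and to reduce all three grouped expressions in \eqref{eq:cl-2} to (permuted copies of) the cLYBe difference $r_{12}r_{23}+r_{13}r_{23}-r_{12}r_{13}-r_{13}r_{12}$. Your bookkeeping claims check out: with $r^{\sigma}=r$ the first and middle groups are literally that difference, and the third group is its image under $\sigma_{12}$ after relabeling the two copies of $r$ via symmetry, so all three vanish.
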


 \begin{rmk}\label{rmk:thm:tri}
 \begin{enumerate}[(1)]
   \item \label{it:tri-0} Eqs.(\ref{eq:b1-1}) and (\ref{eq:b2-1}) demonstrate that the symmetry condition $r=\s(r)$ is natural in the subsequent discussion of Leibniz bialgebras.
   \item \label{it:tri-1} Let $(\ll, [,])$ be a Leibniz algebra. If $r\in \ll\o \ll$ is a symmetric solution of the following equation in $(\ll, [,])$
 \begin{eqnarray}
 &r_{12}r^{\s}_{23}+r_{13}r^{\s}_{23}=r_{12}r^{\s}_{13}+r^{\s}_{13}r_{12},&\label{eq:cybe-1}
 \end{eqnarray}
 then $(\ll, [,], \d_r)$ is a Leibniz bialgebra, where $\d_r$ is defined by Eq.(\ref{eq:cop}).
   \item \label{it:tri-2} Let $(\ll, [,])$ be a Leibniz algebra. If $r\in \ll\o \ll$ is a symmetric solution of the following equation in $(\ll, [,])$
 \begin{eqnarray}
 &r_{23}r^{\s}_{13}+r^{\s}_{12}r^{\s}_{13}=r^{\s}_{23}r^{\s}_{12}+r^{\s}_{12}r^{\s}_{23},&\label{eq:cybe-2}
 \end{eqnarray}
 then $(\ll, [,], \d_r)$ is a Leibniz bialgebra, where $\d_r$ is defined by Eq.(\ref{eq:cop}).
   \item \label{it:tri-3} By Items \ref{it:tri-1} and \ref{it:tri-2},   Eq.(\ref{eq:cybe-1}) or Eq.(\ref{eq:cybe-2}) also can be defined as a cLYBe. However, considering the consistency with \cite[Definition 4.9]{TS}, we choose Eq.(\ref{eq:cybe}) as a cLYBe for a Leibniz algebra.
   \item \label{it:tri-4} The cLYBe Eq.(\ref{eq:cybe}) in a Leibniz algebra is essentially different from the classical Yang-Baxter equation in a Lie algebra (see \cite[Chapter 8]{Maj}).
 \end{enumerate}
 \end{rmk}

 Now we give the characterization of triangular Leibniz bialgebras.

 \begin{pro}\label{pro:qt} Let $(\ll, [,])$ be a Leibniz algebra and $r\in \ll\o \ll$ symmetric. Then the quadruple $(\ll, [,], r, \d_r)$, where $\d_r$ is defined by Eq.(\ref{eq:cop}), is a triangular Leibniz bialgebra if and only if
 \begin{eqnarray}
 &(\d_r\o \id)(r)=r_{13}r_{23}&\label{eq:qt1}
 \end{eqnarray}
 or
 \begin{eqnarray}
 &(\id\o \d_r)(r)=r_{12}r_{13}&\label{eq:qt2}
 \end{eqnarray}
 holds.
 \end{pro}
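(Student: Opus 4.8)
The plan is to reduce everything to the cLYBe via the dictionary of Theorem \ref{thm:tri}. By Corollary \ref{de:qt}, for a symmetric $r$ the quadruple $(\ll,[,],r,\d_r)$ is triangular exactly when $r$ satisfies \eqref{eq:cybe}, so I would prove the proposition by expanding both $(\d_r\o\id)(r)$ and $(\id\o\d_r)(r)$ straight from \eqref{eq:cop} and identifying the resulting equations with \eqref{eq:cybe}.

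First I would expand the left-hand side of \eqref{eq:qt1}. Applying $\d_r$ to the first leg of $r$ and using a second copy $\br=r$,
\[
(\d_r\o\id)(r)=-\br^1\o[\br^2,r^1]\o r^2+[\br^2,r^1]\o\br^1\o r^2+[r^1,\br^2]\o\br^1\o r^2.
\]
Reading the three summands against the products $r_{ij}r_{kl}$ fixed in Theorem \ref{thm:tri} (relabelling the two copies where needed) turns this into $(\d_r\o\id)(r)=-r_{12}r_{23}+r^{\s}_{12}r_{13}+r_{13}r^{\s}_{12}$, so the defect $(\d_r\o\id)(r)-r_{13}r_{23}$ is \emph{exactly} the negative of the cLYBe defect $r_{12}r_{23}+r_{13}r_{23}-r^{\s}_{12}r_{13}-r_{13}r^{\s}_{12}$. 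Hence \eqref{eq:qt1} is literally equivalent to \eqref{eq:cybe}, that is, to triangularity; notably this step uses neither symmetry of $r$ nor the Leibniz identity.

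The same expansion on the second leg gives $(\id\o\d_r)(r)=-r_{23}r_{13}+r^{\s}_{23}r_{12}+r_{12}r^{\s}_{23}$, so \eqref{eq:qt2} is equivalent to $r^{\s}_{23}r_{12}+r_{12}r^{\s}_{23}=r_{12}r_{13}+r_{23}r_{13}$, which after substituting $r=\s(r)$ is the variant cLYBe \eqref{eq:cybe-2}. To close the loop I would show that \eqref{eq:qt2} is equivalent to \eqref{eq:qt1} directly, via the leg-swap $\s\o\id$ on $\ll^{\o 3}$. A term-by-term check shows that $\s\o\id$ interchanges $r_{12}r_{23}$ and $r^{\s}_{12}r_{13}$, and sends $r_{13}r^{\s}_{12}\mapsto r_{23}r_{12}$ and $r_{13}r_{23}\mapsto r_{23}r_{13}$ (the first pair by definition, the rest after relabelling the two copies); once $r=\s(r)$ is used, $\s\o\id$ therefore carries the defect $(\d_r\o\id)(r)-r_{13}r_{23}$ onto $(\id\o\d_r)(r)-r_{12}r_{13}$. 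As $\s\o\id$ is invertible, one defect vanishes iff the other does, giving \eqref{eq:qt1}$\Leftrightarrow$\eqref{eq:qt2}, and both are equivalent to triangularity.

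The main obstacle is precisely this last comparison: the first two steps are routine once the translation table of Theorem \ref{thm:tri} is in hand, but verifying that $\s\o\id$ intertwines the two defects requires careful bookkeeping of which copy of $r$ sits in which tensor slot and where the bracket falls, and it is here that the hypothesis $r=\s(r)$ is essential (without it \eqref{eq:cybe} and \eqref{eq:cybe-2} are genuinely inequivalent conditions). I would organise this by recording the image under $\s\o\id$ of each basic product $r_{ij}r_{kl}$ and $r^{\s}_{ij}r_{kl}$ once and for all, after which the equality of the two defects becomes a short linear computation.
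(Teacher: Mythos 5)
Your proposal is correct and takes essentially the same route as the paper: expand $(\d_r\o \id)(r)$ directly from Eq.(\ref{eq:cop}), observe that Eq.(\ref{eq:qt1}) is then literally the cLYBe (\ref{eq:cybe}) (hence triangularity by Corollary \ref{de:qt}), and obtain Eq.(\ref{eq:qt2}) $\Leftrightarrow$ Eq.(\ref{eq:qt1}) from the symmetry of $r$. The paper compresses that last equivalence into the single phrase ``by the symmetry of $r$''; your explicit leg-swap bookkeeping (which checks out term by term) merely fills in that detail.
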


 \begin{proof} By Eq.(\ref{eq:cop}), we have
 \begin{eqnarray}\label{eq:qt1-1}
 (\d_r\o \id)(r)
 &\stackrel{ }=&-r^1\o [r^2, \br^1]\o \br^2+[r^2, \br^1]\o r^1\o \br^2+[r^1, \br^2]\o \br^1\o r^2
 \end{eqnarray} 
 Then we obtain that Eq.(\ref{eq:qt1}) $\Leftrightarrow$ Eq.(\ref{eq:cybe}) by Eq.(\ref{eq:qt1-1}). Eq.(\ref{eq:qt2}) $\Leftrightarrow$  Eq.(\ref{eq:qt1}) by the symmetry of $r$.
 \end{proof}

 \begin{rmk}
 \begin{enumerate}[(1)]
   \item \label{it:pro:qt1} For quasitriangular Lie bialgebras, $(\id\o \d_r)(r)=r_{13}r_{12}=-r_{12}r_{13}$ (see \cite[Eq.(8.7)]{Maj}), with $\d_r(x)=[x, r^1]\o r^2+r^1\o [x, r^2]$ (see \cite[Eq.(8.5)]{Maj}). For triangular Lie bialgebras, $(\id\o \d_r)(r)=r_{12}r_{13}$ since $r^\s=-r$ (see \cite[Chapter 8]{Maj}).
   \item \label{it:pro:qt2} Eq.(\ref{eq:qt1}) $\Leftrightarrow$ the linear map $r^\#: \ll^*\lr \ll$ defined by $r^\#(\xi)=\langle \xi, r^1\rangle r^2$ is a Leibniz algebra homomorphism; Eq.(\ref{eq:qt2}) $\Leftrightarrow$ the linear map $r^{t\#}: \ll^*\lr \ll$ defined by $r^\#(\xi)=r^1\langle \xi, r^2\rangle$ is a Leibniz algebra homomorphism.
 \end{enumerate}
 \end{rmk}

\section{When Leibniz algebras are Nijenhuis?}\label{se:asibena} In this section, we will show that Leibniz algebras are closely related to Nijenhuis operators.

 In order to obtain the Nijenhuis algebraic structures from Leibniz algebras, we need to introduce the following concept.

 \begin{defi}\label{de:ccybe} Let $(\ll, \d)$ be a Leibniz coalgebra and $\om\in (\ll\o \ll)^*$ a bilinear form. For all $x, y, z\in \ll$, the equation
 \begin{eqnarray}\label{eq:ccybe}
 \om(x, y_{(1)})\om(y_{(2)}, z)+\om(x, z_{(1)})\om(y, z_{(2)})=\om(y, x_{(1)})\om(x_{(2)}, z)+\om(x_{(1)}, z)\om(y, x_{(2)})
 \end{eqnarray}
 is called a {\bf co-classical Leibniz Yang-Baxter equation (ccLYBe for short) in $(\ll, \d)$}.
 \end{defi}

 \begin{ex} \label{ex:de:ccybe} Let $\ll=K\{e, f\}$ be a two dimensional vector space, define a comultiplication on $\ll$ by
 $$\left\{
            \begin{array}{c}
             \d(e)=f\o e+f\o f \\
             \d(f)=0\hspace{20mm}\\
            \end{array}
            \right..$$
 Then $(\ll, \d)$ is a Leibniz coalgebra and

 \begin{enumerate}[(1)]
   \item \label{it:ex:de:ccybe1} \quad
          \begin{tabular}{r|rr}
          $\om$ & $e$  & $f$  \\
          \hline
           $e$ & $\l$  & $-\l$  \\
           $f$ & $0$  &  $0$ \\
        \end{tabular},
   \item \label{it:ex:de:ccybe2}\quad
          \begin{tabular}{r|rr}
          $\om$ & $e$  & $f$  \\
          \hline
           $e$ & $\l$  & $\g$  \\
           $f$ & $\g$  &  $0$ \\
        \end{tabular},
   \item \label{it:ex:de:ccybe3}\quad
          \begin{tabular}{r|rr}
          $\om$ & $e$  & $f$  \\
          \hline
           $e$ & $\l$  & $-\l$  \\
           $f$ & $-\l$ &  $\l$ \\
        \end{tabular}.
 \end{enumerate}
 are solutions of the ccLYBe in $(\ll, \d)$, where $\l, \g$ are parameters.
 \end{ex}

 \begin{thm}\label{de:cqt} Let $(\ll, \d)$ be a Leibniz coalgebra and $\om\in (\ll\o \ll)^*$ be a symmetric (in the sense of $\om(x, y)=\om(y, x)$) solution of the ccLYBe in $(\ll, \d)$. Define a multiplication on $\ll$ by
 \begin{eqnarray}\label{eq:p}
 [x, y]_{\om}=-\om(x, y_{(1)})y_{(2)}+\om(y, x_{(1)})x_{(2)}+x_{(1)}\om(y, x_{(2)}),\quad \forall~x, y\in \ll.
 \end{eqnarray}
 Then $(\ll, [,]_{\om}, \d)$ is a Leibniz bialgebra. We call this bialgebra a {\bf dual triangular Leibniz bialgebra} and denote it by $(\ll, \d, \om, [,]_{\om})$.
 \end{thm}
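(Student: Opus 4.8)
The plan is to recognize Theorem \ref{de:cqt} as the linear dual of Corollary \ref{de:qt} and to establish it by the same strategy used in the proof of Theorem \ref{thm:tri}. Since $(\ll, \d)$ is assumed to be a Leibniz coalgebra, the coalgebra half of Definition \ref{de:lb} comes for free; what remains is to verify (i) that $[,]_\om$ satisfies the Leibniz identity \eqref{eq:l}, and (ii) that the two compatibility conditions \eqref{eq:b1} and \eqref{eq:b2} hold between $[,]_\om$ and $\d$. Conceptually, when $\ll$ is finite-dimensional, Proposition \ref{pro:de:cl}\,(\ref{it:de:cl1}) turns $(\ll, \d)$ into the Leibniz algebra $(\ll^*, \d^*)$, the symmetric form $\om$ into a symmetric $r_\om \in \ll^*\o \ll^*$, the ccLYBe \eqref{eq:ccybe} into the cLYBe \eqref{eq:cybe} for $r_\om$ in $(\ll^*, \d^*)$, and the product $[,]_\om$ of \eqref{eq:p} into the coproduct $\d_{r_\om}$ of \eqref{eq:cop}; Corollary \ref{de:qt} then applies, and the resulting bialgebra axioms transpose back to those of $(\ll, [,]_\om, \d)$. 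I would, however, carry out the argument by direct computation so as to cover the possibly infinite-dimensional case as well.

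First I would check the Leibniz identity. Expanding $[x, [y, z]_\om]_\om$, $[[x, y]_\om, z]_\om$, and $[y, [x, z]_\om]_\om$ by \eqref{eq:p}, each inner bracket is a linear combination of three terms, and applying the outer bracket produces nested coproducts such as $z_{(1)}\o z_{(2)(1)}\o z_{(2)(2)}$. I would rewrite every such nested coproduct using the Leibniz coalgebra identity \eqref{eq:cl} (and its consequence \eqref{eq:cl-1}), so that all terms are expressed through a single application of $\d$ in each tensor leg. The remaining sum of scalar coefficients of the form $\om(\cdot,\cdot)\,\om(\cdot,\cdot)$ then collapses to zero precisely by the ccLYBe \eqref{eq:ccybe}. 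This is the exact dual of Step 3 in the proof of Theorem \ref{thm:tri}, where the coalgebra condition \eqref{eq:cl} for $\d_r$ was reduced to \eqref{eq:cl-2}.

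Next I would verify the two compatibility conditions. Substituting \eqref{eq:p} into \eqref{eq:b1} and into \eqref{eq:b2}, and again normalizing the iterated coproducts via \eqref{eq:cl}, each condition reduces to an identity that is forced by the symmetry $\om(x,y)=\om(y,x)$. This is dual to the fact that in Steps 1 and 2 of Theorem \ref{thm:tri} the conditions \eqref{eq:b1} and \eqref{eq:b2} reduced to \eqref{eq:b1-1} and \eqref{eq:b2-1}, which vanish once $r=\s(r)$; here the role of the factor $r-\s(r)=0$ is played by the antisymmetric part of $\om$, which is zero by hypothesis.

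The main obstacle is the Leibniz-identity step: it is the longest and most error-prone part, since it requires expanding a triple bracket into a large number of terms, carefully applying \eqref{eq:cl} to every nested coproduct to bring all summands to a common form, and only then matching them against the ccLYBe. The two compatibility steps are comparatively short and are governed entirely by symmetry. A useful bookkeeping device throughout is to treat $[,]_\om$ as the transpose of $\d_r$ term by term, which makes the dual of each line in the proof of Theorem \ref{thm:tri} transparent and sharply reduces the risk of sign or index errors.
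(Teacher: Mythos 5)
Your proposal matches the paper's proof in both structure and substance: the paper likewise disposes of the compatibility conditions \eqref{eq:b1}--\eqref{eq:b2} briefly via Eq.~\eqref{eq:cl-1} and the symmetry of $\om$, and then devotes the bulk of the argument to a direct expansion of $[x,[y,z]_\om]_\om-[[x,y]_\om,z]_\om-[y,[x,z]_\om]_\om$ using Eq.~\eqref{eq:p}, normalization of nested coproducts by Eq.~\eqref{eq:cl}, and a final appeal to the ccLYBe \eqref{eq:ccybe} together with \eqref{eq:cl-1}. Your added dualization remark (transposing Corollary \ref{de:qt} in the finite-dimensional case) is sound motivation but, as you note, the computation itself is what carries the general case, exactly as in the paper.
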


 \begin{proof} Eqs.(\ref{eq:b1}) and (\ref{eq:b2}) can be checked by Eq.(\ref{eq:cl-1}) and the symmetry of $\om$. Next we verify that $(\ll, [,]_\om)$ is a Leibniz algebra. For all $x, y, z\in \ll$, we compute
 \begin{eqnarray*}
 &&\hspace{-10mm}[x, [y, z]_{\om}]_{\om}-[[x, y]_{\om}, z]_{\om}-[y, [x, z]_{\om}]_{\om}\\
 &&\stackrel{(\ref{eq:p})}{=}\om(x, z_{(2)(1)})\om(y, z_{(1)})z_{(2)(2)}-\om(z_{(2)}, x_{(1)})\om(y, z_{(1)})x_{(2)}-\om(z_{(2)}, x_{(2)})\om(y, z_{(1)})x_{(1)}\\
 &&\hspace{8mm}-\om(x, y_{(2)(1)})\om(z, y_{(1)})y_{(2)(2)}+\om(y_{(2)}, x_{(1)})\om(z, y_{(1)})x_{(2)}+\om(y_{(2)}, x_{(2)})\om(z, y_{(1)})x_{(1)}\\
 &&\hspace{8mm}-\om(x, y_{(1)(1)})\om(z, y_{(2)})y_{(1)(2)}+\om(y_{(1)}, x_{(1)})\om(z, y_{(2)})x_{(2)}+\om(y_{(1)}, x_{(2)})\om(z, y_{(2)})x_{(1)}\\
 &&\hspace{8mm}-\om(x, y_{(1)})\om(y_{(2)}, z_{(1)})z_{(2)}+\om(x, y_{(1)})\om(z, y_{(2)(1)})y_{(2)(2)}+\om(x, y_{(1)})\om(z, y_{(2)(2)})y_{(2)(1)}\\
 &&\hspace{8mm}+\om(y, x_{(1)})\om(x_{(2)}, z_{(1)})z_{(2)}-\om(y, x_{(1)})\om(z, x_{(2)(1)})x_{(2)(2)}-\om(y, x_{(1)})\om(z, x_{(2)(2)})x_{(2)(1)}\\
 &&\hspace{8mm}+\om(y, x_{(2)})\om(x_{(1)}, z_{(1)})z_{(2)}-\om(y, x_{(2)})\om(z, x_{(1)(1)})x_{(1)(2)}-\om(y, x_{(2)})\om(z, x_{(1)(2)})x_{(1)(1)}\\
 &&\hspace{8mm}-\om(x, z_{(1)})\om(y, z_{(2)(1)})z_{(2)(2)}+\om(x, z_{(1)})\om(z_{(2)}, y_{(1)})y_{(2)}+\om(x, z_{(1)})\om(z_{(2)}, y_{(2)})y_{(1)}\\
 &&\hspace{8mm}+\om(z, x_{(1)})\om(y, x_{(2)(1)})x_{(2)(2)}-\om(z, x_{(1)})\om(x_{(2)}, y_{(1)})y_{(2)}-\om(z, x_{(1)})\om(x_{(2)}, y_{(2)})y_{(1)}\\
 &&\hspace{8mm}+\om(z, x_{(2)})\om(y, x_{(1)(1)})x_{(1)(2)}-\om(z, x_{(2)})\om(x_{(1)}, y_{(1)})y_{(2)}-\om(z, x_{(2)})\om(x_{(1)}, y_{(2)})y_{(1)}\\
 &&\stackrel{(\ref{eq:cl})}{=}-\om(x, z_{(1)(1)})\om(y, z_{(1)(2)})z_{(2)}+\om(x, y_{(2)(1)})\om(z, y_{(2)(2)})y_{(1)}+\om(x, y_{(1)(1)})\om(z, y_{(1)(2)})y_{(2)}\\
 &&\hspace{8mm}-\om(y, x_{(1)(1)})\om(z, x_{(1)(2)})x_{(2)}-\om(y, x_{(2)(1)})\om(z, x_{(2)(2)})x_{(1)}-\om(z_{(2)}, x_{(1)})\om(y, z_{(1)})x_{(2)}\\
 &&\hspace{8mm}-\om(z_{(2)}, x_{(2)})\om(y, z_{(1)})x_{(1)}+\om(y_{(2)}, x_{(1)})\om(z, y_{(1)})x_{(2)}+\om(y_{(2)}, x_{(2)})\om(z, y_{(1)})x_{(1)}\\
 &&\hspace{8mm}+\om(y_{(1)}, x_{(1)})\om(z, y_{(2)})x_{(2)}+\om(y_{(1)}, x_{(2)})\om(z, y_{(2)})x_{(1)}-\om(x, y_{(1)})\om(y_{(2)}, z_{(1)})z_{(2)}\\
 &&\hspace{8mm}+\om(y, x_{(1)})\om(x_{(2)}, z_{(1)})z_{(2)}+\om(y, x_{(2)})\om(x_{(1)}, z_{(1)})z_{(2)}+\om(x, z_{(1)})\om(z_{(2)}, y_{(1)})y_{(2)}\\
 &&\hspace{8mm}+\om(x, z_{(1)})\om(z_{(2)}, y_{(2)})y_{(1)}-\om(z, x_{(1)})\om(x_{(2)}, y_{(1)})y_{(2)}-\om(z, x_{(1)})\om(x_{(2)}, y_{(2)})y_{(1)}\\
 &&\hspace{8mm}-\om(z, x_{(2)})\om(x_{(1)}, y_{(1)})y_{(2)}-\om(z, x_{(2)})\om(x_{(1)}, y_{(2)})y_{(1)}\\
 &&\stackrel{(\ref{eq:ccybe})(\ref{eq:cl-1})}{=}0,
 \end{eqnarray*}
 finishing the proof.
 \end{proof}

 \begin{ex}\label{ex:de:cqt} The Leibniz coalgebra $(\ll, [,])$ together with the symmetric solutions $\om$ defined in Items \ref{it:ex:de:ccybe2} and \ref{it:ex:de:ccybe3} of the ccLYBe in Example \ref{ex:de:ccybe} is a dual triangular Leibniz bialgebra.
 \end{ex}

 \begin{pro}\label{pro:cqt} Let $(\ll, \d)$ be a Leibniz coalgebra and $\om\in (\ll\o \ll)^*$ a symmetric bilinear form. Then the quadruple $(\ll, \d, \om, [,]_{\om})$, where $[,]_{\om}$ is defined by Eq.(\ref{eq:p}), is a dual triangular Leibniz bialgebra if and only if
 \begin{eqnarray}\label{eq:cqt1}
 \om([x, y]_{\om}, z)=\om(x, z_{(1)})\om(y, z_{(2)})
 \end{eqnarray}
 or
 \begin{eqnarray}\label{eq:cqt2}
 \om(x, [y, z]_{\om})=\om(x_{(1)}, y)\om(x_{(2)}, z)
 \end{eqnarray}
 holds, where $x, y, z\in \ll$.
 \end{pro}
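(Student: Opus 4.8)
The plan is to follow the template of Proposition \ref{pro:qt}, exploiting the fact that, by Theorem \ref{de:cqt}, the quadruple $(\ll, \d, \om, [,]_{\om})$ is a dual triangular Leibniz bialgebra precisely when the symmetric form $\om$ is a solution of the ccLYBe, Eq.(\ref{eq:ccybe}). It therefore suffices to establish the chain of equivalences
\[
\text{ccLYBe} \Leftrightarrow \text{Eq.}(\ref{eq:cqt1}) \Leftrightarrow \text{Eq.}(\ref{eq:cqt2}).
\]
The strategy is to compute one of the two displayed identities directly from the defining formula Eq.(\ref{eq:p}), match it termwise with the ccLYBe, and then pass to the second identity purely by a relabeling of the variables together with the symmetry $\om(x,y)=\om(y,x)$.

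First I would expand the left-hand side of Eq.(\ref{eq:cqt1}). Pairing $[x,y]_{\om} = -\om(x,y_{(1)})y_{(2)} + \om(y,x_{(1)})x_{(2)} + x_{(1)}\om(y,x_{(2)})$ against $z$ in the first slot gives
\[
\om([x,y]_{\om}, z) = -\om(x,y_{(1)})\om(y_{(2)},z) + \om(y,x_{(1)})\om(x_{(2)},z) + \om(y,x_{(2)})\om(x_{(1)},z).
\]
Setting this equal to $\om(x,z_{(1)})\om(y,z_{(2)})$ and transposing the negative term to the right, Eq.(\ref{eq:cqt1}) becomes
\[
\om(x,y_{(1)})\om(y_{(2)},z) + \om(x,z_{(1)})\om(y,z_{(2)}) = \om(y,x_{(1)})\om(x_{(2)},z) + \om(y,x_{(2)})\om(x_{(1)},z),
\]
which is literally the ccLYBe once the final product is rewritten as $\om(x_{(1)},z)\om(y,x_{(2)})$ by commutativity of scalars. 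It is worth noting that this first equivalence needs no hypothesis on $\om$ beyond the definition; in particular the co-Leibniz identity Eq.(\ref{eq:cl-1}) plays no role here.

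For the second equivalence I would invoke symmetry. Since Eq.(\ref{eq:cqt1}) is quantified over all $x,y,z\in\ll$, I may perform the cyclic relabeling $x\to y,\ y\to z,\ z\to x$, which yields $\om([y,z]_{\om}, x) = \om(y,x_{(1)})\om(z,x_{(2)})$; applying $\om(a,b)=\om(b,a)$ to both sides converts this into $\om(x,[y,z]_{\om}) = \om(x_{(1)},y)\om(x_{(2)},z)$, i.e. Eq.(\ref{eq:cqt2}). Running the substitution along the inverse cycle gives the converse, so Eqs.(\ref{eq:cqt1}) and (\ref{eq:cqt2}) are genuinely equivalent, which is what accounts for the ``or'' in the statement. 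The only delicate point in the whole argument is the Sweedler bookkeeping in the first expansion, where one must keep the arguments of $\om$ and the surviving tensor leg correctly paired; this is routine but must be carried out attentively. The part played here by the symmetry of $\om$ is exactly analogous to the part played by $r=\s(r)$ in the proof of Proposition \ref{pro:qt}.
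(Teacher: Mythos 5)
Your proposal is correct and follows essentially the same route as the paper's proof: expand $\om([x,y]_{\om},z)$ via Eq.(\ref{eq:p}), observe that Eq.(\ref{eq:cqt1}) is termwise exactly the ccLYBe Eq.(\ref{eq:ccybe}) (which, by Theorem \ref{de:cqt}, is what the dual triangular condition amounts to), and then pass to Eq.(\ref{eq:cqt2}) by relabeling the variables and using the symmetry of $\om$. The only difference is cosmetic: the paper dismisses the converse direction as ``obvious,'' whereas you make the termwise equivalence (and hence the biconditional together with the ``or'') explicit.
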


 \begin{proof} ($\Rightarrow$) For all $x, y, z\in \ll$, one calculates
 \begin{eqnarray*}
 \om([x, y]_{\om}, z)&\stackrel{(\ref{eq:p})}{=}&-\om(x, y_{(1)})\om(y_{(2)}, z)+\om(y, x_{(1)})\om(x_{(2)}, z)+\om(x_{(1)}, z)\om(y, x_{(2)})\\
 &\stackrel{(\ref{eq:ccybe})}{=}&\om(x, z_{(1)})\om(y, z_{(2)}).
 \end{eqnarray*}
 Then Eq.(\ref{eq:cqt1}) holds. By the symmetry of $\om$ and Eq.(\ref{eq:cqt1}), we obtain Eq.(\ref{eq:cqt2}).

 ($\Leftarrow$) is obvious.
 \end{proof}

 \begin{rmk} For dual triangular Lie bialgebras, $\om(x, [y, z]_{\om})=\om(x_{(1)}, z)\om(x_{(2)}, y)$ (see \cite[Eq.(8.13)]{Maj}), with $[x, y]_\om=x_{(1)}\om(x_{(2)}, y)-y_{(1)}\om(x, y_{(2)})$ (see \cite[Eq.(8.11)]{Maj}).
 \end{rmk}

 \begin{defi}\label{de:sym} Let $(\ll, [,])$ be a Leibniz algebra and $\om\in (\ll\o \ll)^*$ a symmetric bilinear form. Assume that for all $x, y, z\in \ll$,
 \begin{eqnarray}\label{eq:symp}
 \om([x, y], z)+\om([x, z], y)=\om([y, z], x)+\om([z, y], x).
 \end{eqnarray}
 Then we call $(\ll, [,])$ {\bf symplectic} and denote it by $(\ll, [,], \om)$.
 \end{defi}

 \begin{rmk}
 The name ``symplectic Leibniz algebra"  comes from the inspiration of symplectic Lie algebra.
 \end{rmk}

 We can get a symplectic Leibniz algebra from a dual triangular Leibniz bialgebra.

 \begin{pro}\label{pro:de:sym} Let $(\ll, \d)$ be a Leibniz coalgebra and $\om\in (\ll\o \ll)^*$ a symmetric bilinear form. If $(\ll, \d, \om, [,]_{\om})$ is a dual triangular Leibniz bialgebra, where $[,]_{\om}$ is defined by Eq.(\ref{eq:p}), then $(\ll, [,]_\om, \om)$ is a symplectic Leibniz algebra.
 \end{pro}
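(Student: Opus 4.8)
The plan is to verify the defining identity Eq.~(\ref{eq:symp}) of a symplectic Leibniz algebra directly, using the characterization of dual triangular Leibniz bialgebras provided by Proposition~\ref{pro:cqt}. Since $(\ll, \d, \om, [,]_{\om})$ is assumed to be a dual triangular Leibniz bialgebra and $\om$ is symmetric, Eq.~(\ref{eq:cqt1}) is available, namely
\begin{eqnarray*}
\om([x, y]_{\om}, z)=\om(x, z_{(1)})\om(y, z_{(2)}), \quad \forall~x, y, z\in\ll.
\end{eqnarray*}

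First I would apply this single identity to each of the four bracket expressions occurring in Eq.~(\ref{eq:symp}). Specializing the arguments appropriately yields
\begin{eqnarray*}
\om([x, y]_{\om}, z)&=&\om(x, z_{(1)})\om(y, z_{(2)}),\\
\om([x, z]_{\om}, y)&=&\om(x, y_{(1)})\om(z, y_{(2)}),\\
\om([y, z]_{\om}, x)&=&\om(y, x_{(1)})\om(z, x_{(2)}),\\
\om([z, y]_{\om}, x)&=&\om(z, x_{(1)})\om(y, x_{(2)}).
\end{eqnarray*}
Substituting these into Eq.~(\ref{eq:symp}) reduces the symplectic condition to the scalar identity
\begin{eqnarray*}
\om(x, z_{(1)})\om(y, z_{(2)})+\om(x, y_{(1)})\om(z, y_{(2)})=\om(y, x_{(1)})\om(z, x_{(2)})+\om(z, x_{(1)})\om(y, x_{(2)}).
\end{eqnarray*}

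The final step is to recognize this reduced identity as the ccLYBe Eq.~(\ref{eq:ccybe}) in disguise. Using the symmetry of $\om$ to flip the factors $\om(y_{(2)}, z)=\om(z, y_{(2)})$, $\om(x_{(2)}, z)=\om(z, x_{(2)})$, and $\om(x_{(1)}, z)=\om(z, x_{(1)})$ in Eq.~(\ref{eq:ccybe}), that equation becomes precisely the display above, with the two summands on each side merely permuted. Since $\om$ solves the ccLYBe by hypothesis, the identity holds for all $x, y, z\in\ll$, and the proof is complete. I expect no serious obstacle here: the argument is purely a substitution followed by a symmetry rewrite, and in particular it avoids any appeal to the coassociativity-type relation Eq.~(\ref{eq:cl}) or its consequence Eq.~(\ref{eq:cl-1}) that were needed in the much longer verification of the Leibniz identity in Theorem~\ref{de:cqt}. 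The only point requiring care is the bookkeeping of Sweedler legs and of which arguments of $\om$ to transpose when matching the four specializations against Eq.~(\ref{eq:ccybe}).
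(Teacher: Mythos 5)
Your proposal is correct and follows essentially the same route as the paper: the paper's proof likewise invokes Eq.~(\ref{eq:cqt1}) to turn the bracket pairings into Sweedler-leg pairings and then closes the gap with the ccLYBe Eq.~(\ref{eq:ccybe}) together with the symmetry of $\om$, only organizing it as a single chain of equalities from $\om([x,y]_\om,z)+\om([x,z]_\om,y)$ to $\om([y,z]_\om,x)+\om([z,y]_\om,x)$ rather than substituting all four terms at once. Your observation that Eq.~(\ref{eq:cl}) and Eq.~(\ref{eq:cl-1}) are not needed is also consistent with the paper's argument.
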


 \begin{proof} For all $x, y, z\in \ll$, we have
 \begin{eqnarray*}
 \om([x, y]_\om, z)+\om([x, z]_\om, y)
 &\stackrel{(\ref{eq:cqt1})}{=}&\om(x, z_{(1)})\om(y, z_{(2)})+\om(x, y_{(1)})\om(z, y_{(2)})\\
 &\stackrel{(\ref{eq:ccybe})}{=}&\om(x_{(1)}, z)\om(x_{(2)}, y)+\om(x_{(1)}, y)\om(x_{(2)}, z)\\
 &&\hspace{10mm} \hbox{~(by the symmetry of } \om)\\
 &\stackrel{(\ref{eq:cqt1})}{=}&\om([y, z]_\om, x)+\om([z, y]_\om, x),
 \end{eqnarray*}
 finishing the proof.
 \end{proof}

 Now we present the main result in this paper.

 \begin{thm}\label{thm:ln} Let $(\ll, [,], \om)$ be a symplectic Leibniz algebra and $r\in \ll\o \ll$. Suppose that $(\ll, [,], r, \d_r)$ is a triangular Leibniz bialgebra with the comultiplication $\d_r$ defined in Eq.(\ref{eq:cop}) such that $(\ll, \d_r, \om,$ $[,]_{\om})$ is a dual triangular Leibniz bialgebra with the multiplication $[,]_{\om}$ defined in Eq.(\ref{eq:p}). Define a linear map $N:\ll\lr \ll$ by
 \begin{eqnarray}
 &N(x)=\om(x, r^1)r^2,~~\forall~x\in \ll.&\label{eq:ys}
 \end{eqnarray}
 Then $(\ll, [,], N)$ is a \N Leibniz algebra.
 \end{thm}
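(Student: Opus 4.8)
The plan is to deduce the \N identity from the standard reformulation of the \N condition together with the two ``dual'' compatibility equations already extracted in Propositions \ref{pro:qt} and \ref{pro:cqt}. Recall first that a linear map $N$ is a \N operator on $(\ll,[,])$ if and only if the deformed bracket $[x,y]_N:=[N(x),y]+[x,N(y)]-N([x,y])$ satisfies $[N(x),N(y)]=N([x,y]_N)$; this is a direct rearrangement of the defining equation. Hence it suffices to prove the two identities
\begin{eqnarray}
{}[x,y]_{\om}&=&[N(x),y]+[x,N(y)]-N([x,y]),\label{eq:plan1}\\
{}[N(x),N(y)]&=&N([x,y]_{\om}),\label{eq:plan2}
\end{eqnarray}
for then substituting \eqref{eq:plan1} into \eqref{eq:plan2} and using linearity of $N$ yields $[N(x),N(y)]+N^2([x,y])=N([N(x),y])+N([x,N(y)])$, which is exactly the \N identity.

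For \eqref{eq:plan1} I would expand $[x,y]_{\om}$ from Eq.(\ref{eq:p}) by substituting the explicit coproduct $\d_r$ of Eq.(\ref{eq:cop}) into every occurrence of $x_{(1)}\o x_{(2)}$ and $y_{(1)}\o y_{(2)}$. Two pairs of terms cancel; the terms $\om(x,r^1)[r^2,y]$ and $\om(y,r^1)[x,r^2]$ regroup as $[N(x),y]+[x,N(y)]$ since $N(x)=\om(x,r^1)r^2$; and the surviving scalar multiples of $r^1$ collapse to $r^1\,c(x,y)$ with $c(x,y)=-\om(x,[r^2,y])-\om(x,[y,r^2])+\om(y,[x,r^2])$. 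Applying the symplectic condition Eq.(\ref{eq:symp}) with $z=r^2$ together with the symmetry of $\om$ gives $c(x,y)=-\om([x,y],r^2)$, and the symmetry $r=\s(r)$ then turns $r^1\,c(x,y)=-\om([x,y],r^2)r^1$ into $-\om([x,y],r^1)r^2=-N([x,y])$, establishing \eqref{eq:plan1}.

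For \eqref{eq:plan2} I would compute $N([x,y]_{\om})=\om([x,y]_{\om},r^1)r^2$ and apply Eq.(\ref{eq:cqt1}), which is available because $(\ll,\d_r,\om,[,]_{\om})$ is a dual triangular Leibniz bialgebra (Proposition \ref{pro:cqt}), to rewrite $\om([x,y]_{\om},r^1)=\om(x,(r^1)_{(1)})\om(y,(r^1)_{(2)})$. Thus $N([x,y]_{\om})$ is the image of $(\d_r\o\id)(r)=(r^1)_{(1)}\o(r^1)_{(2)}\o r^2$ under $\om(x,-)$ on the first leg and $\om(y,-)$ on the second. By Proposition \ref{pro:qt} the triangular hypothesis supplies Eq.(\ref{eq:qt1}), namely $(\d_r\o\id)(r)=r_{13}r_{23}=r^1\o\br^1\o[r^2,\br^2]$; feeding the same two functionals into this expression produces $\om(x,r^1)\om(y,\br^1)[r^2,\br^2]=[N(x),N(y)]$, which is \eqref{eq:plan2}.

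I expect the verification of \eqref{eq:plan1} to be the main obstacle: one must carry out the full expansion of $[x,y]_{\om}$, track the cancellations carefully, and recognize that the leftover coefficient $c(x,y)$ is governed precisely by the symplectic identity Eq.(\ref{eq:symp}). It is exactly here that the symplectic hypothesis enters, and the symmetry of both $\om$ and $r$ is used to rewrite the residual term as $-N([x,y])$. By contrast, once Eqs.(\ref{eq:qt1}) and (\ref{eq:cqt1}) are in hand, \eqref{eq:plan2} is a clean contraction of the triangular identity against the two functionals, so the conceptual weight of the theorem rests on the deformation identity \eqref{eq:plan1}.
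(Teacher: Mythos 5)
Your proof is correct, and it takes a genuinely different route from the paper's. The paper never introduces the deformed bracket $[x,y]_N=[N(x),y]+[x,N(y)]-N([x,y])$: it first contracts the ccLYBe \eqref{eq:ccybe} (written for $\d_r$) with the symplectic form to produce the auxiliary scalar identity \eqref{eq:cqt-sym}, and then verifies the Nijenhuis identity by one long direct expansion of $[N(x),N(y)]-N([N(x),y]+[x,N(y)]-N([x,y]))$, invoking \eqref{eq:cqt-sym}, the symplectic condition \eqref{eq:symp}, the symmetry of $\om$, and finally the cLYBe \eqref{eq:cybe} in tensor form. Your decomposition instead isolates two structural facts: (i) $[x,y]_{\om}=[x,y]_N$, proved by expanding Eq.~\eqref{eq:p} along Eq.~\eqref{eq:cop} and using only symplecticity and the symmetry of $\om$ and of $r$ (no Yang--Baxter equation enters here); and (ii) $N([x,y]_{\om})=[N(x),N(y)]$, obtained by contracting the characterizations Eq.~\eqref{eq:cqt1} (Proposition \ref{pro:cqt}) and Eq.~\eqref{eq:qt1} (Proposition \ref{pro:qt}) against $\om(x,-)\o\om(y,-)\o\id$. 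I checked the details: the two cancellations in your expansion, the identification $c(x,y)=-\om([x,y],r^2)$ via \eqref{eq:symp} with $z=r^2$, and the final use of $r=\s(r)$ are all sound, and both propositions you cite are indeed available under the theorem's hypotheses (triangularity gives \eqref{eq:qt1}, dual triangularity gives \eqref{eq:cqt1}). What your route buys is conceptual clarity: it reveals that the dual triangular multiplication $[,]_{\om}$ is precisely the Nijenhuis deformation of $[,]$ by $N$, and that $N:(\ll,[,]_{\om})\lr(\ll,[,])$ is then multiplicative, which is the Nijenhuis property in disguise; it also cleanly separates which hypothesis carries which half of the argument. What the paper's route buys is self-containedness: it relies only on the raw equations \eqref{eq:ccybe}, \eqref{eq:symp}, \eqref{eq:cybe} rather than on Propositions \ref{pro:qt} and \ref{pro:cqt}, at the price of a far less transparent computation.
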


 \begin{proof} By Eqs.(\ref{eq:ccybe}) and (\ref{eq:cop}), for all $x, y, z\in \ll$, one has
 \begin{eqnarray*}
 &\hspace{-15mm}0=-\om(x, r^1)\om(r^2 y, z)+\om(x, r^2 y)\om(r^1, z)
 +\om(x, y r^2)\om(r^1, z)-\om(x, r^1)\om(y, r^2 z)\nonumber\\
 &+\om(x, r^2 z)\om(y, r^1)+\om(x, z r^2)\om(y, r^1)-\om(y, x r^2)\om(r^1, z)-\om(x r^2, z)\om(y, r^1). 
 \end{eqnarray*}
 Since, further, $(\ll, [,], \om)$ is a symplectic Leibniz algebra, we obtain
 \begin{eqnarray}
 &\om(z, r^1)\om(x y, r^2)+\om(y, r^1)\om(x z, r^2)=\om(x, r^1)\om(y z, r^2)+\om(x, r^1)\om(z y, r^2).&\label{eq:cqt-sym}
 \end{eqnarray}
 In the rest, we check that the linear map $N$ defined in Eq.(\ref{eq:ys}) is a Nijenhuis operator on $(\ll, [,])$.
 \begin{eqnarray*}
 &&\hspace{-15mm}[N(x), N(y)]-N([N(x), y]+[x, N(y)]-N([x, y]))\\
 &\stackrel{(\ref{eq:ys})}{=}&\om(x, r^1)\om(y, \bar{r}^1)[r^2, \bar{r}^2]
 -\om(x, r^1)\om([r^2, y], \bar{r}^1)\bar{r}^2-\om(y, r^1)\om([x, r^2], \bar{r}^1)\bar{r}^2\\
 &&+\om([x, y], r^1)\om(r^2, \bar{r}^1)\bar{r}^2\\
 &\stackrel{(\ref{eq:cqt-sym})}{=}&\om(x, r^1)\om(y, \bar{r}^1)[r^2, \bar{r}^2]
 -\om(x, r^1)\om([r^2, y], \bar{r}^1)\bar{r}^2-\om(y, r^1)\om([x, r^2], \bar{r}^1)\bar{r}^2\\
 &&+\om(x, r^1)\om([y, \bar{r}^1], r^2)\bar{r}^2+\om(x, r^1)\om([\bar{r}^1, y], r^2)\bar{r}^2-\om(y, r^1)\om([x, \bar{r}^1], r^2)\bar{r}^2\\
 &\stackrel{(\ref{eq:symp})}{=}&\om(x, r^1)\om(y, \bar{r}^1)[r^2, \bar{r}^2]+\om(x, r^1)\om(y, [r^2, \bar{r}^1])\bar{r}^2-\om(x, [r^2, \bar{r}^1])\om(y, r^1)\bar{r}^2\\
 &&-\om(x, [\bar{r}^1, r^2])\om(y, r^1)\bar{r}^2 \hspace{10mm}(\hbox{by the symmetry of } \om)\\
 &\stackrel{(\ref{eq:cybe})}{=}&0.
 \end{eqnarray*}
 Therefore, $(\ll, [,], N)$ is a \N Leibniz algebra.
 \end{proof}

 \begin{ex}\label{ex:thm:ln} Let $(\ll, [,])$ be a $2$-dimensional Leibniz algebra with basis $\{e, f\}$, where $[,]$ is given by
       \begin{center}
        \begin{tabular}{r|rr}
          $[,]$ & $e$  & $f$  \\
          \hline
           $e$ & $0$  & $0$  \\
           $f$ & $e$  & $e$ \\
        \end{tabular}.
        \end{center}
 \begin{enumerate}[(1)]
   \item \label{it:thm:ln1} Set an element $r\in \ll\otimes \ll$ by
        \begin{equation*}
         r=\l e \o e+\g e \o f+\g f \o e.
        \end{equation*}
        Then $(\ll, [,], r, \d_r)$ is a triangular Leibniz bialgebra with the comultiplication $\d_r$:
        $$\left\{
            \begin{array}{ll}
             \d_r(e)=0\hspace{34mm}&\\
             \d_r(f)=(\l+\g) e\o e+\g e\o f&\\
            \end{array}
            \right.,$$
        where $\l, \g$ are parameters.
      \begin{enumerate}[(i)]
         \item \label{it:thm:ln1-1} Define a bilinear form $\om$ on $\ll$ by
        \begin{center}
        \begin{tabular}{r|rr}
          $\om$ & $e$  & $f$  \\
          \hline
           $e$ & $o$  & $\nu$  \\
           $f$ & $\nu$  &  $\kappa$ \\
        \end{tabular},
        \end{center}
        where $\nu, \kappa$ are parameters, then $(\ll, \d_r, \om, [,]_{\om})$ is a dual triangular Leibniz bialgebra with the multiplication $[,]_{\om}$ and $(\ll, [,], \om)$ is a symplectic Leibniz algebra. So by Theorem \ref{thm:ln}, let
        $$\left\{
            \begin{array}{ll}
             N(e)=\g \nu e\hspace{22mm}&\\
             N(f)=(\l \nu+\g \kappa)e+\g \nu f&\\
            \end{array}
            \right.,$$
        $(\ll, [,], N)$ is a Nijenhuis Leibniz algebra.

         \item \label{it:thm:ln1-2} Define a bilinear form $\om$ on $\ll$ by\vskip3mm
        \begin{center}
        \begin{tabular}{r|rr}
          $\om$ & $e \hspace{6mm}$  & $f \hspace{6mm}$  \\
          \hline
           $e$ & $\nu\hspace{6mm}$  & $\frac{\nu}{\g}(\l+\g)\hspace{2mm}$  \\
           $f$ & $\frac{\nu}{\g}(\l+\g)$  &  $\frac{\nu}{\g^2}(\l+\g)^2$ \\
        \end{tabular},
        \end{center}\vskip3mm
        where $\l, \g\neq 0, \nu\neq 0$ are parameters, then $(\ll, \d_r, \om, [,]_{\om})$ is a dual triangular Leibniz bialgebra with the multiplication $[,]_{\om}$. But in this case $(\ll, [,], \om)$ is not a symplectic Leibniz algebra since $\nu\neq 0$.
   \delete{So by Theorem \ref{thm:ln}, let
        $$\left\{
            \begin{array}{c}
             N(e)=(2\l+\g)\nu e+\g \nu f \hspace{21mm}\\
             N(f)=(\l+\g)\nu (\frac{2\l}{\g}+1)e+(\l+\g)\nu f \\
            \end{array}
            \right.,$$
        $(\ll, [,], N)$ is a Nijenhuis Leibniz algebra.}
        \end{enumerate}
   \item \label{it:thm:ln2} Set an element $r\in \ll\otimes \ll$ by
        \begin{equation*}
         r=\l e\o e-\l e \o f-\l f \o e+\l f\o f.
        \end{equation*}
        Then $(\ll, [,], r, \d_r)$ is a triangular Leibniz bialgebra with the comultiplication $\d_r$:
        $$\left\{
            \begin{array}{c}
             \d_r(e)=\l(e\o f-f\o e)\\
             \d_r(f)=\l(e\o f-f\o e)\\
            \end{array}
            \right.,$$
        where $\l$ is a parameter. Define a bilinear form $\om$ on $\ll$ by
        \begin{center}
        \begin{tabular}{r|rr}
          $\om$ & $e$  & $f$  \\
          \hline
           $e$ & $\g$  & $\nu$  \\
           $f$ & $\nu$  &  $\kappa$ \\
        \end{tabular},
        \end{center}
        where $\g, \nu, \kappa$ are parameters such that $\nu^2=\g \kappa$,
        then $(\ll, \d_r, \om, [,]_{\om})$ is a dual triangular Leibniz bialgebra with the multiplication $[,]_{\om}$. If further, $\g=0$, then $(\ll, [,], \om)$ is a symplectic Leibniz algebra. So by Theorem \ref{thm:ln}, let
        $$\left\{
            \begin{array}{ll}
             N(e)=0 \hspace{16mm}&\\
             N(f)=-\l \kappa (e-f)&\\
            \end{array}
            \right.,$$
        $(\ll, [,], N)$ is a Nijenhuis Leibniz algebra.

 \end{enumerate}

 \end{ex}

 In what follows, we consider the dual case of Theorem \ref{thm:ln}. For this, we need the following preparations.

 \begin{defi}\label{de:csym} Let $(\ll, \d)$ be a Leibniz coalgebra and $r\in \ll\o \ll$ a symmetric tensor. Assume that
 \begin{eqnarray}\label{eq:csymp1}
 {r^1}_{(1)}\o {r^1}_{(2)}\o r^2+{r^1}_{(1)}\o r^2\o {r^1}_{(2)}=r^2\o {r^1}_{(1)}\o {r^1}_{(2)}+r^2\o {r^1}_{(2)}\o {r^1}_{(1)}.
 \end{eqnarray}
 Then we call $(\ll, \d)$ {\bf cosymplectic} and denote it by $(\ll, \d, r)$.
 \end{defi}

 A cosymplectic Leibniz coalgebra can be constructed by a triangular Leibniz bialgebra.

 \begin{pro}\label{pro:de:csym} Let $(\ll, [, ])$ be a Leibniz algebra and $r\in \ll\o \ll$ symmetric. If $(\ll, [,], r, \d_r)$ is a triangular Leibniz bialgebra, where $\d_r$ is defined by Eq.(\ref{eq:cop}), then $(\ll, \d_r, r)$ is a cosymplectic Leibniz coalgebra.
 \end{pro}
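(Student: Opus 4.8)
The plan is to verify the cosymplectic identity Eq.(\ref{eq:csymp1}) directly, computing its two sides separately and matching them; this is the coalgebra analogue of Proposition \ref{pro:de:sym}, so I expect it to be short once the triangular hypothesis is fed in through Proposition \ref{pro:qt} together with the symmetry of $r$.

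First I would treat the left-hand side ${r^1}_{(1)}\o {r^1}_{(2)}\o r^2+{r^1}_{(1)}\o r^2\o {r^1}_{(2)}$. The first summand is exactly $(\d_r\o\id)(r)$, so since $(\ll,[,],r,\d_r)$ is triangular, Proposition \ref{pro:qt} (namely Eq.(\ref{eq:qt1})) lets me replace it by $r_{13}r_{23}=r^1\o\br^1\o[r^2,\br^2]$. The second summand is obtained from the first by flipping the last two tensor legs, so it becomes $r^1\o[r^2,\br^2]\o\br^1$. Hence the left-hand side collapses to $r^1\o\br^1\o[r^2,\br^2]+r^1\o[r^2,\br^2]\o\br^1$.

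Next I would expand the right-hand side $r^2\o {r^1}_{(1)}\o {r^1}_{(2)}+r^2\o {r^1}_{(2)}\o {r^1}_{(1)}$ directly from the definition of $\d_r$ in Eq.(\ref{eq:cop}), writing $\d_r(r^1)=-\br^1\o[\br^2,r^1]+[\br^2,r^1]\o\br^1+[r^1,\br^2]\o\br^1$. Substituting this into both summands (the second being the first with its last two legs flipped), four of the six resulting terms cancel in pairs: the two terms proportional to $r^2\o\br^1\o[\br^2,r^1]$ (with opposite signs) and the two proportional to $r^2\o[\br^2,r^1]\o\br^1$. What survives is $r^2\o[r^1,\br^2]\o\br^1+r^2\o\br^1\o[r^1,\br^2]$, and this reduction uses nothing beyond the shape of the coproduct.

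Finally I would invoke the symmetry $r=\s(r)$, i.e.\ $r^1\o r^2=r^2\o r^1$, to interchange $r^1$ and $r^2$ in the surviving right-hand expression, turning it into $r^1\o[r^2,\br^2]\o\br^1+r^1\o\br^1\o[r^2,\br^2]$, which is precisely the collapsed left-hand side; this establishes Eq.(\ref{eq:csymp1}) and hence that $(\ll,\d_r,r)$ is cosymplectic. The only genuinely substantive input is the triangular condition entering through Eq.(\ref{eq:qt1}), while everything else is formal. The main obstacle is purely bookkeeping: correctly tracking the positions of the three tensor legs through the two leg-flips and the cancellations, since a single misplaced factor would spoil the match between the two sides.
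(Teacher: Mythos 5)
Your proof is correct, and it handles the right-hand side of Eq.(\ref{eq:csymp1}) differently from the paper. The paper proves the identity as a single chain of equalities running from left to right: it applies Eq.(\ref{eq:qt1}) to rewrite ${r^1}_{(1)}\o {r^1}_{(2)}\o r^2+{r^1}_{(1)}\o r^2\o {r^1}_{(2)}$ as $r^1\o \br^1\o [r^2,\br^2]+r^1\o [r^2,\br^2]\o \br^1$ (exactly your first step), then invokes the cLYBe Eq.(\ref{eq:cybe}) together with the symmetry of $r$ to convert this into $[r^2,\br^2]\o r^1\o \br^1+[r^2,\br^2]\o \br^1\o r^1$, and finally applies Eq.(\ref{eq:qt1}) a second time (read backwards, after cyclically rotating the legs) to recognize this as $r^2\o {r^1}_{(1)}\o {r^1}_{(2)}+r^2\o {r^1}_{(2)}\o {r^1}_{(1)}$. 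You instead compute the two sides independently: the left side via Eq.(\ref{eq:qt1}) just as the paper does, but the right side purely formally, by expanding $\d_r(r^1)$ from Eq.(\ref{eq:cop}), cancelling the two pairs of terms, and then swapping $r^1\leftrightarrow r^2$ by symmetry. Your route uses the triangular hypothesis only once, through Proposition \ref{pro:qt}, and never cites the cLYBe explicitly; it also makes visible that the collapse of the right-hand side to $r^2\o[r^1,\br^2]\o\br^1+r^2\o\br^1\o[r^1,\br^2]$ holds for any symmetric $r$ with a coproduct of the shape (\ref{eq:cop}), triangular or not, so the substantive hypothesis is localized in a single step. The paper's chain is slightly more compact and makes the role of the Yang-Baxter equation explicit, while yours is more self-contained on the right-hand side; both arguments are complete and of comparable length, and your bookkeeping of the tensor legs (the two flips and the two cancellations) checks out exactly.
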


 \begin{proof} We compute as follows.
 \begin{eqnarray*}
 {r^1}_{(1)}\o {r^1}_{(2)}\o r^2+{r^1}_{(1)}\o r^2\o {r^1}_{(2)}
 &\stackrel{(\ref{eq:qt1})}{=}&r^1\o \br^1\o [r^2, \br^2]+r^1\o [r^2, \br^2]\o \br^1\\
 &\stackrel{(\ref{eq:cybe})}{=}&[r^2, \br^2]\o r^1\o \br^1+[r^2, \br^2]\o \br^1\o r^1\\
 &&\hspace{10mm} \hbox{~(by the symmetry of } r)\\
 &\stackrel{(\ref{eq:qt1})}{=}&r^2\o {r^1}_{(1)}\o {r^1}_{(2)}+r^2\o {r^1}_{(2)}\o {r^1}_{(1)},
 \end{eqnarray*}
 finishing the proof.
 \end{proof}

 \begin{defi} A {\bf \N Leibniz coalgebra} is a triple $(\ll, \d, S)$ consisting of a Leibniz coalgebra $(\ll, \d)$ and a \N operator $S$, i.e., a linear map $S:\ll\lr \ll$ satisfies the equality below
 \begin{eqnarray}
 S(x_{(1)})\o S(x_{(2)})+S^2(x)_{(1)}\o S^2(x)_{(2)}=S(S(x)_{(1)})\o S(x)_{(2)}+S(x)_{(1)}\o S(S(x)_{(2)}),~~\forall~x\in \ll.
 \end{eqnarray}
 \end{defi}

 \begin{thm}\label{thm:cln} Let $(\ll, \d, r)$ be a cosymplectic Leibniz coalgebra and $\om\in (\ll\o \ll)^*$. Suppose that $(\ll, \d, \om, [,]_\om)$ is a dual triangular Leibniz bialgebra with the multiplication $[,]_\om$ defined in Eq.(\ref{eq:p}) such that $(\ll, [,]_\om, r, \d_r)$ is a triangular Leibniz bialgebra with the comultiplication $\d_r$ defined in Eq.(\ref{eq:cop}). Define a linear map $S:\ll\lr \ll$ by
 \begin{eqnarray}
 &S(x)=r^1\om(r^2, x),~~\forall~x\in \ll.&\label{eq:cys}
 \end{eqnarray}
 Then $(\ll, \d, S)$ is a \N Leibniz coalgebra.
 \end{thm}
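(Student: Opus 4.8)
The plan is to prove Theorem~\ref{thm:cln} as the exact dual of Theorem~\ref{thm:ln}, running the same argument under the dictionary: the product $[,]_{\om}$ becomes the given coproduct $\d$, the symmetric tensor $r$ becomes the symmetric form $\om$, the cLYBe~\eqref{eq:cybe} becomes the ccLYBe~\eqref{eq:ccybe}, and the symplectic identity~\eqref{eq:symp} becomes the cosymplectic identity~\eqref{eq:csymp1}; correspondingly the operator $N(x)=\om(x,r^1)r^2$ is replaced by $S(x)=r^1\om(r^2,x)$. The Nijenhuis identity to be checked is taken with respect to the given coproduct $\d$, and the hypotheses supply precisely three facts: $\om$ solves~\eqref{eq:ccybe} in $(\ll,\d)$ (as $(\ll,\d,\om,[,]_{\om})$ is dual triangular), $r$ solves~\eqref{eq:cybe} in $(\ll,[,]_{\om})$ (as $(\ll,[,]_{\om},r,\d_r)$ is triangular), and $r$ satisfies~\eqref{eq:csymp1}.

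First I would expand the four constituents of the defect against $\d(x)=x_{(1)}\o x_{(2)}$, writing $\br=r$:
\begin{align*}
S(x_{(1)})\o S(x_{(2)})&=\om(r^2,x_{(1)})\om(\br^2,x_{(2)})\,r^1\o\br^1,\\
S^2(x)_{(1)}\o S^2(x)_{(2)}&={\br^1}_{(1)}\o{\br^1}_{(2)}\,\om(\br^2,r^1)\om(r^2,x),\\
S(S(x)_{(1)})\o S(x)_{(2)}&=\br^1\om(\br^2,{r^1}_{(1)})\o{r^1}_{(2)}\,\om(r^2,x),\\
S(x)_{(1)}\o S(S(x)_{(2)})&={r^1}_{(1)}\o\br^1\om(\br^2,{r^1}_{(2)})\,\om(r^2,x),
\end{align*}
so that the claim is precisely the equality of the sum of the first two with the sum of the last two. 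These are, term by term, the duals of $[N(x),N(y)]$, $N^2([x,y])$, $N([N(x),y])$ and $N([x,N(y)])$ appearing in the proof of Theorem~\ref{thm:ln}.

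Next, mirroring the derivation of~\eqref{eq:cqt-sym}, I would establish the coalgebra analogue of~\eqref{eq:cqt-sym}: substituting the product~\eqref{eq:p} into the cLYBe~\eqref{eq:cybe} and then invoking~\eqref{eq:csymp1} yields a bracket-free identity in $\ll\o\ll$ (carrying the coproduct $\d$, dually to the way~\eqref{eq:cqt-sym} carries the bracket). This identity rewrites the $S^2$-term ${\br^1}_{(1)}\o{\br^1}_{(2)}\,\om(\br^2,r^1)$ into a sum of three terms in which the copies $r$ and $\br$ are no longer coupled through a single Sweedler pair, exactly dual to the step replacing the $N^2$-term $\om([x,y],r^1)\om(r^2,\br^1)\br^2$ by three terms. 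I would then apply~\eqref{eq:csymp1} once more to transfer Sweedler factors between the two tensor slots (the dual of the second use of~\eqref{eq:symp}), and finally recognize what remains as a contraction of the ccLYBe~\eqref{eq:ccybe}, which therefore vanishes. This shows the defect is $0$, so $(\ll,\d,S)$ is a \N Leibniz coalgebra.

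The main obstacle is the bookkeeping forced by passing from $\ll$ to $\ll\o\ll$. In Theorem~\ref{thm:ln} the whole computation lives in $\ll$, and~\eqref{eq:cqt-sym} together with~\eqref{eq:symp} only rearranges the scalar coefficients standing in front of a single output vector; here the defect lives in $\ll\o\ll$, so every use of~\eqref{eq:csymp1} and of the dual of~\eqref{eq:cqt-sym} must be matched against the correct tensor leg, and the two copies $r,\br$ with their Sweedler components ${r^1}_{(1)},{r^1}_{(2)}$ must be tracked through both slots. A secondary point requiring care is to apply each relation only for the structure in which it holds: $r$ solves~\eqref{eq:cybe} relative to the induced product $[,]_{\om}$, whereas~\eqref{eq:csymp1} and~\eqref{eq:ccybe} are statements about the given coproduct $\d$, so the coproduct $\d_r$ of the triangular structure itself never appears in the computation.
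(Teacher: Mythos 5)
Your proposal is correct and follows essentially the same route as the paper's own proof: the paper likewise expands the four terms of the defect via Eq.(\ref{eq:cys}), derives the auxiliary bracket-free identity (its Eq.(\ref{eq:cqt-sym1})) by substituting Eq.(\ref{eq:p}) into the cLYBe Eq.(\ref{eq:cybe}) for $[,]_\om$ and then invoking Eq.(\ref{eq:csymp1}), uses that identity to decouple the $S^2$-term, applies Eq.(\ref{eq:csymp1}) once more, and concludes by a contraction of the ccLYBe Eq.(\ref{eq:ccybe}), with $\d_r$ indeed never entering the computation. The only cosmetic difference is that the paper records the auxiliary identity as an equation in $\ll\o\ll\o\ll$ and only afterwards contracts its last leg against $\om(-,x)$, whereas you state it directly in contracted form.
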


 \begin{proof} Since $(\ll, [,]_\om, r, \d_r)$ is a triangular Leibniz bialgebra and $(\ll, \d, r)$ is a cosymplectic Leibniz coalgebra, we get
\begin{eqnarray}\label{eq:cqt-sym1}
 \hspace{-50mm}0\hspace{-2mm}
 &\stackrel{(\ref{eq:cybe})}{=}&\hspace{-2mm}r^1\o [r^2, \br^1]_\om\o \br^2+r^1\o \br^1\o [r^2, \br^2]_\om-[r^1, \br^1]_\om\o r^2\o \br^2-[r^1, \br^1]_\om\o \br^2\o r^2\nonumber\\
 \hspace{-2mm}&\stackrel{(\ref{eq:p})}{=}&\hspace{-2mm}-r^1\o {\br^1}_{(2)} \o \br^2 \om(r^2, {\br^1}_{(1)})+r^1\o {r^2}_{(2)}\o \br^2 \om(\br^1, {r^2}_{(1)})+r^1\o {r^2}_{(1)} \o \br^2 \om(\br^1, {r^2}_{(2)})\nonumber\\
 \hspace{-2mm}&&-r^1\o \br^1 \o {\br^2}_{(2)} \om(r^2, {\br^2}_{(1)})+r^1\o \br^1 \o {r^2}_{(2)} \om(\br^2, {r^2}_{(1)})+r^1\o \br^1 \o {r^2}_{(1)} \om(\br^2, {r^2}_{(2)})\nonumber\\
 \hspace{-2mm}&&-{r^1}_{(1)} \o r^2\o \br^2 \om(\br^1, {r^1}_{(2)})-{r^1}_{(1)}\o \br^2 \o r^2 \om(\br^1, {r^1}_{(2)})\nonumber\\ 
 \hspace{-2mm}&\stackrel{(\ref{eq:csymp1})}{=}&\hspace{-2mm}-r^1\o {\br^1}_{(1)} \o {\br^1}_{(2)} \om(r^2, \br^2)+{r^1}_{(1)}\o {r^1}_{(2)} \o \br^2 \om(\br^1, r^2)-r^1\o {\br^1}_{(2)} \o {\br^1}_{(1)} \om(r^2, \br^2)\\
 \hspace{-2mm}&&+{r^1}_{(1)}\o \br^1 \o {r^1}_{(2)} \om(\br^2, r^2).\nonumber
 \end{eqnarray}
 Now we check that the linear map $S$ defined in Eq.(\ref{eq:cys}) is a Nijenhuis operator on $(\ll, \d)$. For all $x\in \ll$,
 \begin{eqnarray*}
 &&\hspace{-13mm}S(x_{(1)})\o S(x_{(2)})-S(S(x)_{(1)})\o S(x)_{(2)}-S(x)_{(1)}\o S(S(x)_{(2)})+S^2(x)_{(1)}\o S^2(x)_{(2)}\\
 &\stackrel{(\ref{eq:cys})}{=}&r^1\o \bar{r}^1\om(r^2, x_{(1)})\om(\bar{r}^2, x_{(2)})
 -r^1\o {\bar{r}^1}_{(2)}\om(r^2, {\bar{r}^1}_{(1)})\om(\bar{r}^2, x)-{r^1}_{(1)}\o \bar{r}^1\om(r^2, x)\om(\bar{r}^2, {r^1}_{(2)})\\
 &&+{r^1}_{(1)}\o {r^1}_{(2)} \om(r^2, \br^1)\om(\br^2, x)\\
 &\stackrel{(\ref{eq:cqt-sym1})}{=}&r^1\o \bar{r}^1\om(r^2, x_{(1)})\om(\bar{r}^2, x_{(2)})
 -r^1\o {\bar{r}^1}_{(2)}\om(r^2, {\bar{r}^1}_{(1)})\om(\bar{r}^2, x)-{r^1}_{(1)}\o \bar{r}^1\om(r^2, x)\om(\bar{r}^2, {r^1}_{(2)})\\
 &&+r^1\o {\br^1}_{(1)} \om(r^2, \br^2)\om({\br^1}_{(2)}, x)+r^1\o {\br^1}_{(2)} \om(r^2, \br^2)\om({\br^1}_{(1)}, x)-{r^1}_{(1)}\o \br^1 \om(\br^2, r^2)\om({r^1}_{(2)}, x)\\ 
 &\stackrel{(\ref{eq:csymp1})}{=}&r^1\o \bar{r}^1\big(\om(r^2, x_{(1)})\om(\bar{r}^2, x_{(2)})
 -\om({r^2}_{(2)}, x)\om(\bar{r}^2, {r^2}_{(1)})+\om(r^2, {\br^2}_{(1)})\om({\br^2}_{(2)}, x)\\
 &&-\om(\bar{r}^2, {r^2}_{(2)})\om({r^2}_{(1)}, x)\big)\\
 &\stackrel{(\ref{eq:ccybe})}{=}&0.
 \end{eqnarray*}
 Therefore, $(\ll, \d, S)$ is a \N Leibniz coalgebra.
 \end{proof}

 \begin{ex}\label{ex:thm:cln} Let $\ll=K\{e, f\}$ be the two-dimensional Leibniz coalgebra defined in Example \ref{ex:de:ccybe}.
 \begin{enumerate}[(1)]
   \item \label{it:thm:ln1} By Example \ref{ex:de:cqt}, define a bilinear form $\om$ on $(\ll, \d)$ by
       \begin{center}
        \begin{tabular}{r|rr}
          $\om$ & $e$  & $f$  \\
          \hline
           $e$ & $\l$  & $\g$  \\
           $f$ & $\g$  &  $0$ \\
        \end{tabular}.
       \end{center}
        Then $(\ll, \d, \om, [,]_\om)$ is a dual triangular Leibniz bialgebra with the multiplication $[,]_\om$:
       \begin{center}
        \begin{tabular}{r|rr}
          $[,]_\om$ & $e\hspace{6mm}$  & $f\hspace{1mm}$  \\
          \hline
           $e$ & $(\l+\g)f$  & $\g f$  \\
           $f$ & $0\hspace{6mm}$  &  $0\hspace{1mm}$ \\
        \end{tabular}.
       \end{center}
        where $\l, \g$ are parameters.
      \begin{enumerate}[(i)]
         \item \label{it:thm:ln1-1} Set an element $r$ in $\ll\o \ll$ by
           \begin{eqnarray*}
           r=\nu e\o f+\nu f\o e+\kappa f\o f,
           \end{eqnarray*}
        where $\nu, \kappa$ are parameters, then $(\ll, [,]_\om, r, \d_r)$ is a triangular Leibniz bialgebra with the comultiplication $\d_r$ and $(\ll, \d, r)$ is a cosymplectic Leibniz coalgebra. So by Theorem \ref{thm:cln}, let
        $$\left\{
            \begin{array}{ll}
             S(e)=\g \nu e+(\l \nu+\g \kappa)f&\\
             S(f)=\g \nu f\hspace{22mm}&\\
            \end{array}
            \right.,$$
        $(\ll, \d, S)$ is a Nijenhuis Leibniz coalgebra.

         \item \label{it:thm:ln1-2}  Set an element $r$ in $\ll\o \ll$ by
           \begin{eqnarray*}
           r=\nu e\o e+\frac{\nu}{\g}(\l+\g) e\o f+\frac{\nu}{\g}(\l+\g) f\o e+\frac{\nu}{\g^2}(\l+\g)^2 f\o f,
           \end{eqnarray*}
        where $\nu\neq 0$ is a parameter, then $(\ll, [,]_\om, r, \d_r)$ is a triangular Leibniz bialgebra with the comultiplication $\d_r$. But in this case $(\ll, \d, r)$ is not a cosymplectic Leibniz coalgebra.
        \end{enumerate}
   \item \label{it:thm:ln2} By Example \ref{ex:de:cqt}, define a bilinear form $\om$ on $(\ll, \d)$ by
       \begin{center}
        \begin{tabular}{r|rr}
          $\om$ & $e$  & $f$  \\
          \hline
           $e$ & $\l$  & $-\l$  \\
           $f$ & $-\l$ &  $\l$ \\
        \end{tabular}.
       \end{center}
        Then $(\ll, \d, \om, [,]_\om)$ is a dual triangular Leibniz bialgebra with the multiplication $[,]_\om$:
       \begin{center}
        \begin{tabular}{r|rr}
          $[,]_\om$ & $e\hspace{6mm}$  & $f\hspace{6mm}$  \\
          \hline
           $e$ & $0\hspace{6mm}$  & $\l(e+f)$  \\
           $f$ & $-\l(e+f)$  &  $0\hspace{6mm}$ \\
        \end{tabular}.
       \end{center}
        where $\l$ is a parameter.

        Set an element $r$ in $\ll\o \ll$ by
           \begin{eqnarray*}
           r=\g~ f\o f,
           \end{eqnarray*}
        then $(\ll, [,]_\om, r, \d_r)$ is a triangular Leibniz bialgebra with the comultiplication $\d_r$ and $(\ll, \d, r)$ is a cosymplectic Leibniz coalgebra. So by Theorem \ref{thm:cln}, let
        $$\left\{
            \begin{array}{ll}
             S(e)=-\l \g f&\\
             S(f)=\l \g f \hspace{2mm}&\\
            \end{array}
            \right.,$$
        $(\ll, \d, S)$ is a Nijenhuis Leibniz coalgebra.
 \end{enumerate}

 \end{ex}

 \section{Classification of triangular Leibniz bialgebras of low dimensions}
 In this section, we give all the triangular Leibniz bialgebras based on the Leibniz algebras of dimensions 2 and 3 over the real field presented in \cite{AOR}. Throughout this section, $\l, \g, \nu, \kappa$ are parameters.

 \subsection{Classification of 2-dimensional triangular Leibniz bialgebras}

 \begin{thm}\label{thm:d2} There are two kinds of Leibniz algebras of dimension 2, see \cite{L93,Cu,AOR}. We now give all triangular Leibniz bialgebras on these 2-dimensional Leibniz algebras, see Tables \ref{tab:d2-1}-\ref{tab:d2-2}.

 \begin{enumerate}[(a)]
   \item For Leibniz algebra

   \begin{center}
        \begin{tabular}{r|rr}
          $[,]$ & $e$  & $f$  \\
          \hline
           $e$ & $0$  & $0$  \\
           $f$ & $0$  & $e$ \\
        \end{tabular}.
   \end{center}\vskip3mm

   \begin{center} Table \ref{tab:d2-1} \label{tab:d2-1}\\
   \begin{tabular}[t]{c|c}
         \hline\hline
        \bf{Leibniz $r$-matrices}  & \bf{Leibniz coalgebra induced by $r$-matrices}\\
        \hline
        $r=\l e \o e+\g e \o f+\g f \o e$
        &$\left\{
            \begin{array}{ll}
             \d_r(e)=0&\\
             \d_r(f)=\g e\o e&\\
            \end{array}
            \right.$\\
        \hline
    \end{tabular}
    \end{center}\vskip3mm

   \item For Leibniz algebra
   \begin{center}
        \begin{tabular}{r|rr}
          $[,]$ & $e$  & $f$  \\
          \hline
           $e$ & $0$  & $0$  \\
           $f$ & $e$  & $e$ \\
        \end{tabular}.
   \end{center}\vskip2mm

   \begin{center}Table \ref{tab:d2-2} \label{tab:d2-2}\\
   \begin{tabular}[t]{c|c}
   \hline\hline
        \bf{Leibniz $r$-matrices}  & \bf{Leibniz coalgebra induced by $r$-matrices}\\
        \hline
        $r=\l e \o e+\g e \o f+\g f \o e$
        & $\left\{
            \begin{array}{ll}
             \d_r(e)=0&\\
             \d_r(f)=(\l+\g) e\o e+\g e\o f&\\
            \end{array}
            \right.$ \\
            \hline
    $r=\l e\o e-\l e \o f-\l f \o e+\l f\o f$
    & $\left\{
            \begin{array}{ll}
             \d_r(e)=\l(e\o f-f\o e)&\\
             \d_r(f)=\l(e\o f-f\o e)&\\
            \end{array}
            \right..$\\
            \hline
    \end{tabular}
    \end{center}
\end{enumerate}

\end{thm}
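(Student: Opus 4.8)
The plan is to handle the two isomorphism classes of $2$-dimensional Leibniz algebras separately, since their classification is already available in the cited references; for each bracket table I determine all triangular structures and then record the induced coproducts. By Corollary \ref{de:qt} and Remark \ref{rmk:thm:tri}(\ref{it:tri-0}), a triangular structure on $(\ll,[,])$ is nothing but a flip-symmetric $r$ solving the cLYBe Eq.(\ref{eq:cybe}); imposing $r=\s(r)$ from the outset, the general candidate is $r=\l\,e\o e+\g(e\o f+f\o e)+\kappa\,f\o f$, a $3$-parameter family.

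First I would compute $\d_r(e)$ and $\d_r(f)$ directly from Eq.(\ref{eq:cop}) as explicit functions of $\l,\g,\kappa$; because $\dim\ll=2$ this is a short calculation in which only the one or two nonvanishing brackets contribute. Rather than expanding all six products in the cLYBe, I would then invoke Proposition \ref{pro:qt}, whose condition Eq.(\ref{eq:qt1}) reads $(\d_r\o\id)(r)=r_{13}r_{23}$: both sides are single elements of $\ll\o\ll\o\ll$, so comparing their eight coordinates turns triangularity into a small system of quadratic equations in $\l,\g,\kappa$. Substituting the already-computed $\d_r$ into this identity produces that system with minimal bookkeeping, and the admissible $r$ together with the corresponding $\d_r$ fill in Tables \ref{tab:d2-1} and \ref{tab:d2-2}.

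Solving the system is where the two cases diverge. For algebra (a), whose only nonzero bracket is $[f,f]=e$, the coordinate comparison forces $\kappa=0$ with $\l,\g$ free, giving the single row of Table \ref{tab:d2-1}. For algebra (b), with $[f,e]=[f,f]=e$, the system collapses to the two relations $\kappa(\l+\g)=0$ and $\kappa(\g+\kappa)=0$. The main obstacle is then the branch analysis and the verification that it is exhaustive: the branch $\kappa=0$ yields $r=\l\,e\o e+\g(e\o f+f\o e)$, while $\kappa\neq0$ forces $\l=-\g$ and $\kappa=-\g$, i.e.\ $r=\l(e\o e-e\o f-f\o e+f\o f)$ after renaming, and one must confirm these are the only possibilities and that they overlap only at $r=0$. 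Reading off $\d_r$ for each then reproduces the two rows of Table \ref{tab:d2-2}; the remaining effort is the routine tensor-coordinate matching, which I would keep manageable by listing only the nonvanishing brackets before expanding $r_{13}r_{23}$ and $(\d_r\o\id)(r)$.
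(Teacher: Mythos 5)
Your proposal is correct and matches the approach the paper leaves implicit: the theorem is stated without proof, and the intended argument is exactly your computation — impose symmetry, reduce triangularity to the cLYBe (your use of Proposition \ref{pro:qt} is equivalent, since for symmetric $r$ Eq.(\ref{eq:qt1}) is the cLYBe), and solve the resulting quadratic system on the general ansatz $r=\l e\o e+\g(e\o f+f\o e)+\kappa f\o f$. I checked the systems you report: for algebra (a) the equations force $\kappa^2=\g\kappa=0$, hence $\kappa=0$ with $\l,\g$ free, and for algebra (b) they reduce to $\kappa(\l+\g)=0$ and $\kappa(\g+\kappa)=0$, whose two branches give precisely the two rows of Table \ref{tab:d2-2}, overlapping only at $r=0$.
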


 \subsection{Classification of 3-dimensional triangular Leibniz bialgebras}
 \begin{thm}\label{thm:d3} There are thirteen kinds of Leibniz algebras of dimension 3, see \cite[Chaper 3]{AOR}. We now give all triangular Leibniz bialgebras on these 3-dimensional Leibniz algebras, see Tables \ref{tab:d3-1}-\ref{tab:d3-11}.

 \begin{enumerate}[(1)]
   \item \label{tab:d3-1} For Leibniz algebra
       \begin{center}
        \begin{tabular}{r|rrr}
          $[,]$ & $e$  & $f$ & $g$ \\
          \hline
           $e$ & $0$  & $0$ & $0$ \\
           $f$ & $0$  & $e$ & $f$  \\
           $g$ & $-2e$  & $-f$ & $0$\\
        \end{tabular}.
        \end{center}\vskip1mm

   \begin{center} Table \ref{tab:d3-1}
   \begin{tabular}[t]{c|c}
         \hline\hline
        \bf{Leibniz $r$-matrices}  & \bf{Leibniz coalgebra induced by $r$-matrices}\\
        \hline
        $r=\l g \o g$
        &$\left\{
            \begin{array}{ll}
             \d_r(e)=-2\l (e\o g-g\o e)&\\
             \d_r(f)=\l g\o f& \\
             \d_r(g)=0&  \\
            \end{array}
            \right.$\\
        \hline
        $r=\l e\o e+\g e \o g+\g g \o e-g f\o f$
        &$\left\{
            \begin{array}{ll}
             \d_r(e)=0&\\
             \d_r(f)=-\g(e\o f-f\o e)&\\
             \makecell[c]{\d_r(g)=-2\l e\o e-2\g e\o g\\+\g f\o f}&\\
            \end{array}
            \right.$\\
            \hline
        $r=\l e\o f+\l f \o e$
        &$\left\{
            \begin{array}{ll}
             \d_r(e)=0&\\
             \d_r(f)=-\l e\o e &\\
             \d_r(g)=-3\l e\o f&\\
            \end{array}
            \right.$\\
            \hline
        $r=\l e\o e+\g e\o g+\g g \o e$
        &$\left\{
            \begin{array}{ll}
             \d_r(e)=0&\\
             \d_r(f)=\g e\o f &\\
             \d_r(g)=-2\l e\o e-2\g e\o g&\\
            \end{array}
            \right..$\\
            \hline
    \end{tabular}
    \end{center}\vskip1mm

   \item \label{tab:d3-13} For Leibniz algebra
       \begin{center}
        \begin{tabular}{r|rrr}
          $[,]$ & $e\quad$  & $f$ & $g$ \\
          \hline
           $e$ & $0\quad$  & $0$ & $0$ \\
           $f$ & $0\quad$  & $0$ & $0$  \\
           $g$ & $e+f$  & $0$ & $e$  \\
        \end{tabular}.
        \end{center}\vskip3mm

   \begin{center}Table \ref{tab:d3-13}
   \begin{tabular}[t]{c|c}
         \hline\hline
        \bf{Leibniz $r$-matrices}  & \bf{Leibniz coalgebra induced by $r$-matrices}\\
        \hline
        $\makecell[c]{r=\l e \o e+\g e \o f+\g f \o e+\nu f \o f}$
        &$\left\{
            \begin{array}{ll}
             \d_r(e)=0\hspace{56mm}\\
             \d_r(f)=0\hspace{56mm}\\
             \makecell[c]{\d_r(g)=\l e\o e+\g e\o f+\l f\o e\\+\g f\o f}\\
            \end{array}
            \right.$\\
        \hline
        $\makecell[c]{r=\l e \o f+\l f \o e+\g f\o f-\l f\o g\\-\l g\o f}$
        &$\left\{
            \begin{array}{ll}
             \d_r(e)=-\l e\o f+\l f\o e\hspace{18mm}\\
             \d_r(f)=0\hspace{46mm}\\
             \d_r(g)=-\l e\o f+\l f\o e+2\l f\o f\\
            \end{array}
            \right..$\\
            \hline
    \end{tabular}
    \end{center}

   \item \label{tab:d3-2} For Leibniz algebra
       \begin{center}
        \begin{tabular}{r|rrr}
          $[,]$ & $e$  & $f$ & $g$ \\
          \hline
           $e$ & $0$  & $0$ & $0$ \\
           $f$ & $0$  & $0$ & $f$  \\
           $g$ & $\a e$  & $-f$ & $0$  \\
        \end{tabular},
        where $0\neq\a\in \mathfrak{R}$ (real field).
        \end{center}\vskip1mm

   \begin{center}Table \ref{tab:d3-2}
   \begin{tabular}[t]{c|c}
         \hline\hline
        \bf{Leibniz $r$-matrices}  & \bf{Leibniz coalgebra induced by $r$-matrices}\\
        \hline
        $\makecell[c]{r=\l e \o e+\g e \o f+\g f \o e\\+\nu e \o g+\nu g \o e}$
        &$\left\{
            \begin{array}{ll}
             \d_r(e)=0&\\
             \d_r(f)=\nu e\o f&\\
             \makecell[c]{\d_r(g)=\l \a e\o e+((\a-1)\g+\nu)e\o f\\+\nu \a e\o g}&\\
            \end{array}
            \right.$\\
        \hline
        $r=\l e \o f+\l f \o e+\g g\o g$
        &$\left\{
            \begin{array}{ll}
             \d_r(e)=\g\a(e\o g-g\o e)&\\
             \d_r(f)=\g g\o f&\\
             \d_r(g)=\l(\a-1)e\o f&\\
            \end{array}
            \right.$\\
            \hline
        $\makecell[c]{r=\l e \o f+\l f \o e+\g f\o f+\nu f\o g\\ +\nu g\o f+\frac{\nu^2}{\g} g\o g}$
        &$\left\{
            \begin{array}{ll}
             \makecell[c]{\d_r(e)=\nu \a(e\o f-f\o e)+\frac{\nu^2 \a}{\g}(e\o g\\-g\o e)}&\\
             \d_r(f)=\nu f\o f+\frac{\nu^2}{\g} g\o f&\\
             \makecell[c]{\d_r(g)=\l(\a-1)e\o f-\g f\o f\\-\nu g\o f}&\\
            \end{array}
            \right.$\\
            \hline
        $r=\l e\o e+\g e \o f+\g f \o e+\nu f\o f$
        &$\left\{
            \begin{array}{ll}
             \d_r(e)=0&\\
             \d_r(f)=0 &\\
             \makecell[c]{\d_r(g)=\l\a e\o e+\g(\a-1) e\o f\\-\nu f\o f}&\\
            \end{array}
            \right..$\\
            \hline
    \end{tabular}
    \end{center}\vskip1mm

   \item \label{tab:d3-3} For Leibniz algebra
       \begin{center}
        \begin{tabular}{r|rrr}
          $[,]$ & $e$  & $f$ & $g$ \\
          \hline
           $e$ & $0$  & $0$ & $0$ \\
           $f$ & $0$  & $0$ & $f$  \\
           $g$ & $0$  & $-f$ & $e$  \\
        \end{tabular}.
        \end{center}\vskip3mm

   \begin{center}Table \ref{tab:d3-3}
   \begin{tabular}[t]{c|c}
         \hline\hline
        \bf{Leibniz $r$-matrices}  & \bf{Leibniz coalgebra induced by $r$-matrices}\\
        \hline
        $r=\l e \o e+\g e \o f+\g f \o e+\nu f \o f$
        &$\left\{
            \begin{array}{ll}
             \d_r(e)=0&\\
             \d_r(f)=0&\\
             \d_r(g)=-\g e\o f-\nu f\o f&\\
            \end{array}
            \right.$\\
        \hline
        $\makecell[c]{r=\l e\o e+\g e \o f+\g f \o e\\+\nu e\o g+\nu g\o e}$
        &$\left\{
            \begin{array}{ll}
             \d_r(e)=0&\\
             \d_r(f)=\nu e\o f &\\
             \d_r(g)=\nu e\o e-\g e\o f&\\
            \end{array}
            \right..$\\
            \hline
    \end{tabular}
    \end{center}\vskip3mm

   \item \label{tab:d3-4} For Leibniz algebra
       \begin{center}
        \begin{tabular}{r|rrr}
          $[,]$ & $e$  & $f$ & $g$ \\
          \hline
           $e$ & $0$  & $0$ & $0$ \\
           $f$ & $0$  & $e$ & $0$  \\
           $g$ & $0$  & $0$ & $e$  \\
        \end{tabular}.
        \end{center}\vskip3mm

   \begin{center} Table \ref{tab:d3-4}\\
   \begin{tabular}[t]{c|c}
         \hline\hline
        \bf{Leibniz $r$-matrices}  & \bf{Leibniz coalgebra induced by $r$-matrices}\\
        \hline
        $\makecell[c]{r=\l e \o e+\g e \o f+\g f \o e\\+\nu e \o g+\nu g\o e}$
        &$\left\{
            \begin{array}{ll}
             \d_r(e)=0&\\
             \d_r(f)=\g e\o e&\\
             \d_r(g)=\nu e\o e&\\
            \end{array}
            \right..$\\
        \hline
    \end{tabular}
    \end{center}\vskip3mm

   \item \label{tab:d3-5} For Leibniz algebra
       \begin{center}
        \begin{tabular}{r|rrr}
          $[,]$ & $e$  & $f$ & $g$ \\
          \hline
           $e$ & $0$  & $0$ & $0$ \\
           $f$ & $0$  & $e$ & $0$  \\
           $g$ & $0$  & $0$ & $-e$  \\
        \end{tabular}.
        \end{center}\vskip3mm

   \begin{center} Table \ref{tab:d3-5}
   \begin{tabular}[t]{c|c}
         \hline\hline
        \bf{Leibniz $r$-matrices}  & \bf{Leibniz coalgebra induced by $r$-matrices}\\
        \hline
        $\makecell[c]{r=\l e \o e+\g e \o f+\g f \o e+\nu e \o g\\+\nu g\o e}$
        &$\left\{
            \begin{array}{ll}
             \d_r(e)=0\hspace{10mm}\\
             \d_r(f)=\g e\o e\hspace{2mm}\\
             \d_r(g)=-\nu e\o e\\
            \end{array}
            \right.$\\
        \hline
        $\makecell[c]{r=\l e\o e+\g e \o f+\g f \o e+\g e\o g\\+\g g\o e+\nu f\o f+\nu f\o g\\+\nu g\o f+\nu g\o g}$
        &$\left\{
            \begin{array}{ll}
             \d_r(e)=0\hspace{76mm}\\
             \makecell[c]{\d_r(f)=\g e\o e+2\nu e\o f-\nu f\o e\\+2\nu e\o g-\nu g\o e} \hspace{3mm}\\
             \makecell[c]{\d_r(g)=-\g e\o e-2\nu e\o f+\nu f\o e\\-2\nu e\o g+\nu g\o e}\\
            \end{array}
            \right.$\\
            \hline
        $\makecell[c]{r=\l e\o e+\g e \o f+\g f \o e-\g e\o g\\-\g g\o e+\nu f\o f-\nu f\o g\\-\nu g\o f+\nu g\o g}$
        &$\left\{
            \begin{array}{ll}
             \d_r(e)=0\hspace{73mm}\\
             \makecell[c]{\d_r(f)=\g e\o e+2\nu e\o f-\nu f\o e\\-2\nu e\o g+\nu g\o e}\\
             \makecell[c]{\d_r(g)=\g e\o e+2\nu e\o f-\nu f\o e\\-2\nu e\o g+\nu g\o e}\\
            \end{array}
            \right..$\\
            \hline
    \end{tabular}
    \end{center}\vskip3mm

   \item \label{tab:d3-6} For Leibniz algebra
       \begin{center}
        \begin{tabular}{r|rrr}
          $[,]$ & $e$  & $f$ & $g$ \\
          \hline
           $e$ & $0$  & $0$ & $0$ \\
           $f$ & $0$  & $e$ & $0$  \\
           $g$ & $0$  & $e$ & $\a e$  \\
        \end{tabular},
        where $0\neq \a\in \mathfrak{R}$.
        \end{center}\vskip3mm

   \begin{center}Table \ref{tab:d3-6}
   \begin{tabular}[t]{c|c}
         \hline\hline
        \bf{Leibniz $r$-matrices}  & \bf{Leibniz coalgebra induced by $r$-matrices}\\
        \hline
        $\makecell[c]{r=\l e \o e+\g e \o f+\g f \o e\\+\nu e \o g+\nu g\o e}$
        &$\left\{
            \begin{array}{ll}
             \d_r(e)=0\hspace{20mm}\\
             \d_r(f)=\g e\o e\hspace{12mm}\\
             \d_r(g)=(\g+\nu \a) e\o e\\
            \end{array}
            \right.$\\
        \hline
        $\makecell[c]{r=\l e\o e+\g (e \o f+f \o e)\\+\frac{\g \kappa}{\nu} (e\o g+g\o e)\\-(\nu+\kappa \a) f\o f+\nu (f\o g\\+g\o f)+\kappa g\o g}$
        &$\left\{
            \begin{array}{ll}
             \d_r(e)=0\hspace{125mm}&\\
             \makecell[c]{\d_r(f)=\g e\o e-(\nu+2\kappa \a) e\o f+\kappa \a f\o e+(2\nu \\+\kappa) e\o g-(\nu+\kappa) g\o e \hspace{19mm}}&\\
             \makecell[c]{\d_r(g)=\frac{\g}{\nu}(\nu+\kappa \a) e\o e+(2\nu \a-\nu-\kappa \a) e\o f\\-\nu \a f\o e+(2\kappa \a+\nu) e\o g\\-\kappa \a g\o e}&\\
        where~~\nu\neq 0, 0\neq \a\leq \frac{1}{4}$ and $\nu=\frac{-1\pm \sqrt{1-4\a}}{2}\kappa.&\\
            \end{array}
            \right..$\\
            \hline
    \end{tabular}
    \end{center}\vskip3mm

   \item \label{tab:d3-7} For Leibniz algebra
       \begin{center}
        \begin{tabular}{r|rrr}
          $[,]$ & $e$  & $f$ & $g$ \\
          \hline
           $e$ & $0$  & $0$ & $0$ \\
           $f$ & $0$  & $0$ & $0$  \\
           $g$ & $0$  & $e$ & $0$  \\
        \end{tabular}.
        \end{center}\vskip3mm

   \begin{center}Table \ref{tab:d3-7}
   \begin{tabular}[t]{c|c}
         \hline\hline
        \bf{Leibniz $r$-matrices}  & \bf{Leibniz coalgebra induced by $r$-matrices}\\
        \hline
        $\makecell[c]{r=\l e \o e+\g e \o f+\g f \o e+\nu e\o g\\+\nu g\o e+\kappa f \o f}$
        &$\left\{
            \begin{array}{ll}
             \d_r(e)=0\\
             \d_r(f)=0\\
             \d_r(g)=0\\
            \end{array}
            \right.$\\
        \hline
        $r=\l e\o e+\g e \o f+\g f \o e+\nu g\o g$
        &$\left\{
            \begin{array}{ll}
             \d_r(e)=0\hspace{23mm}\\
             \d_r(f)=\nu e\o g-\nu g\o e\\
             \d_r(g)=0 \hspace{23mm}\\
            \end{array}
            \right..$\\
            \hline
    \end{tabular}
    \end{center}\vskip3mm

   \item \label{tab:d3-8} For Leibniz algebra
       \begin{center}
        \begin{tabular}{r|rrr}
          $[,]$ & $e$  & $f$ & $g$ \\
          \hline
           $e$ & $0$  & $0$ & $0$ \\
           $f$ & $0$  & $0$ & $0$  \\
           $g$ & $f$  & $e$ & $0$  \\
        \end{tabular}.
        \end{center}\vskip3mm

   \begin{center}Table \ref{tab:d3-8}
   \begin{tabular}[t]{c|c}
         \hline\hline
        \bf{Leibniz $r$-matrices}  & \bf{Leibniz coalgebra induced by $r$-matrices}\\
        \hline
        $r=\l g\o g$
        &$\left\{
            \begin{array}{ll}
             \d_r(e)=\l f\o g-\l g\o f\\
             \d_r(f)=\l e\o g-\l g\o e\\
             \d_r(g)=0\hspace{23mm}\\
            \end{array}
            \right.$\\
        \hline
        $\makecell[c]{r=\l e \o e-\l e \o f-\l f \o e+\nu e\o g\\+\nu g\o e+\l f \o f-\nu f\o g\\-\nu g\o f}$
        &$\left\{
            \begin{array}{ll}
             \d_r(e)=-\nu e\o f+\nu f\o e\hspace{62mm}\\
             \d_r(f)=-\nu e\o f+\nu f\o e\hspace{62mm}\\
             \makecell[c]{\d_r(g)=-\l e\o e+\l e\o f+\l f\o e\\\qquad-\l f\o f-\nu e\o g+\nu f\o g}\\
            \end{array}
            \right.$\\
            \hline
        $r=\l e\o g+\l g\o e+\l f \o g+\l g\o f$
        &$\left\{
            \begin{array}{ll}
             \d_r(e)=-\l e\o f+\l f\o e\\
             \d_r(f)=\l e\o f-\l f\o e\\
             \d_r(g)=\l e\o g+\l f\o g\\
            \end{array}
            \right.$\\
            \hline
        $r=\l e\o e+\g e \o f+\g f \o e+\nu f\o f$
        &$\left\{
            \begin{array}{ll}
             \d_r(e)=0\hspace{56mm}\\
             \d_r(f)=0\hspace{56mm}\\
             \makecell[c]{\d_r(g)=\g e\o e+\nu e\o f+\l f\o e\\+\g f\o f}\\
            \end{array}
            \right..$\\
            \hline
    \end{tabular}
    \end{center}\vskip3mm

   \item \label{tab:d3-9} For Leibniz algebra
       \begin{center}
        \begin{tabular}{r|rrr}
          $[,]$ & $e$  & $f$ & $g$ \\
          \hline
           $e$ & $0$  & $0$ & $0$ \\
           $f$ & $0$  & $0$ & $0$  \\
           $g$ & $f$  & $-e$ & $0$  \\
        \end{tabular}.
        \end{center}\vskip3mm

   \begin{center}Table \ref{tab:d3-9}
   \begin{tabular}[t]{c|c}
         \hline\hline
        \bf{Leibniz $r$-matrices}  & \bf{Leibniz coalgebra induced by $r$-matrices}\\
        \hline
        $r=\l g\o g$
        &$\left\{
            \begin{array}{ll}
             \d_r(e)=\l f\o g-\l g\o f\\
             \d_r(f)=-\l e\o g+\l g\o e\\
             \d_r(g)=0\hspace{26mm}\\
            \end{array}
            \right.$\\
        \hline
        $r=\l e \o e+\g e \o f+\g f \o e+\nu f \o f$
        &$\left\{
            \begin{array}{ll}
             \d_r(e)=0\hspace{58mm}\\
             \d_r(f)=0\hspace{58mm}\\
             \makecell[c]{\d_r(g)=-\g e\o e-\nu e\o f\\\qquad\quad+\l f\o e+\g f\o f}\\
            \end{array}
            \right..$\\
            \hline
    \end{tabular}
    \end{center}\vskip3mm

   \item \label{tab:d3-10} For Leibniz algebra
       \begin{center}
        \begin{tabular}{r|rrr}
          $[,]$ & $e$  & $f\quad$ & $g$ \\
          \hline
           $e$ & $0$  & $0\quad$ & $0$ \\
           $f$ & $0$  & $0\quad$ & $0$  \\
           $g$ & $f$  & $\a e+f$ & $0$  \\
        \end{tabular},
        where $0\neq \a\in \mathfrak{R}$.
        \end{center}\vskip3mm

   \begin{center}Table \ref{tab:d3-10}
   \begin{tabular}[t]{c|c}
         \hline\hline
        \bf{Leibniz $r$-matrices}  & \bf{Leibniz coalgebra induced by $r$-matrices}\\
        \hline
        $r=\l g\o g$
        &$\left\{
            \begin{array}{ll}
             \d_r(e)=\l f\o g-\l g\o f\hspace{36mm}\\
             \makecell[c]{\d_r(f)=\l \a e\o g-\l \a g\o e\\\qquad\quad+\l f\o g-\l g\o f}\\
             \d_r(g)=0\hspace{60mm}\\
            \end{array}
            \right.$\\
        \hline
        $\makecell[c]{r=\l e \o e+\g e \o f+\g f \o e+\nu f \o f}$
        &$\left\{
            \begin{array}{ll}
             \d_r(e)=0\hspace{80mm}\\
             \d_r(f)=0\hspace{80mm}\\
             \makecell[c]{\d_r(g)=\g \a e\o e+\nu \a e\o f+(\l+\g) \\\times f\o e+(\g+\nu) f\o f}\\
            \end{array}
            \right.$\\
            \hline
        $\makecell[c]{r=\g e\o g+\g g\o e+\l f\o g+\l g\o f,\\~\hbox{where~}\g=\frac{-1\pm \sqrt{5}}{2}\l \hbox{~and~} \a=1}$
        &$\left\{
            \begin{array}{ll}
             \d_r(e)=-\g e\o f+\g f\o e\hspace{16mm}\\
             \d_r(f)=(\l-\g) e\o f+(\g-\l) f\o e\\
             \d_r(g)=\l e\o g+(\l+\g) f\o g\hspace{9mm}\\
            \end{array}
            \right.$\\
            \hline
        $\makecell[c]{r=(\frac{\g \nu}{\l^2}(\nu-2\l)+\g) e \o e+(\frac{\g \nu}{\l}-\g) (e \o f\\+f \o e)+\l (e\o g+g\o e)\\+\g f \o f+\nu (f\o g+g\o f),\\~\hbox{where~}\nu=\frac{1\pm \sqrt{5}}{2}\l,~\l\neq 0 \hbox{~and~} \a=1}$
        &$\left\{
            \begin{array}{ll}
             \d_r(e)=-\l e\o f+\l f\o e\hspace{83mm}\\
             \d_r(f)=(\nu-\l) e\o f+(\l-\nu) f\o e\hspace{68mm}\\
             \makecell[c]{\d_r(g)=\frac{\nu-\l}{\l}\g e\o e+\g e\o f+\frac{(\nu-\l)\g \nu}{\l}\\\qquad\quad\times f\o e+\frac{\g \nu}{\l} f\o f+\nu e\o g\\+(\l+\nu) f\o g}\\
            \end{array}
            \right..$\\
            \hline
    \end{tabular}
    \end{center}\vskip3mm

   \item \label{tab:d3-12} For Leibniz algebra
       \begin{center}
        \begin{tabular}{r|rrr}
          $[,]$ & $e$  & $f$ & $g$ \\
          \hline
           $e$ & $0$  & $0$ & $0$ \\
           $f$ & $0$  & $0$ & $0$  \\
           $g$ & $f$  & $0$ & $e$  \\
        \end{tabular}.
        \end{center}\vskip3mm

   \begin{center}Table \ref{tab:d3-12}
   \begin{tabular}[t]{c|c}
         \hline\hline
        \bf{Leibniz $r$-matrices}  & \bf{Leibniz coalgebra induced by $r$-matrices}\\
        \hline
        $\makecell[c]{r=\l e \o e+\g e \o f+\g f \o e+\nu f \o f}$
        &$\left\{
            \begin{array}{ll}
             \d_r(e)=0\hspace{26mm}\\
             \d_r(f)=0\hspace{26mm}\\
             \d_r(g)=\l f\o e+\g f\o f\\
            \end{array}
            \right..$\\
        \hline
    \end{tabular}
    \end{center}\vskip3mm

   \item \label{tab:d3-11} For Leibniz algebra
       \begin{center}
        \begin{tabular}{r|rrr}
          $[,]$ & $e$  & $f$ & $g$ \\
          \hline
           $e$ & $0$  & $0$ & $0$ \\
           $f$ & $0$  & $0$ & $0$  \\
           $g$ & $e$  & $f$ & $0$  \\
        \end{tabular}.
        \end{center}\vskip3mm

   \begin{center}Table \ref{tab:d3-11}
   \begin{tabular}[t]{c|c}
         \hline\hline
        \bf{Leibniz $r$-matrices}  & \bf{Leibniz coalgebra induced by $r$-matrices}\\
        \hline
        $r=\l g\o g$
        &$\left\{
            \begin{array}{ll}
             \d_r(e)=\l e\o g-\l g\o e\\
             \d_r(f)=\l f\o g-\l g\o f\\
             \d_r(g)=0\hspace{24mm}\\
            \end{array}
            \right.$\\
        \hline
        $\makecell[c]{r=\l e \o e+\g e \o g+\g g \o e}$
        &$\left\{
            \begin{array}{ll}
             \d_r(e)=0\hspace{26mm}\\
             \d_r(f)=-\g e\o f+\g f\o e \\
             \d_r(g)=\l e\o e+\g e\o g\hspace{3mm}\\
            \end{array}
            \right.$\\
            \hline
        $\makecell[c]{r=\l e\o e+\g e\o f+\g f\o e+\nu f\o f}$
        &$\left\{
            \begin{array}{ll}
             \d_r(e)=0\hspace{56mm}\\
             \d_r(f)=0\hspace{56mm}\\
             \makecell[c]{\d_r(g)=\l e\o e+\g e\o f+\g f\o e\\+\nu f\o f}\\
            \end{array}
            \right.$\\
            \hline
        $\makecell[c]{r=\frac{\l \g}{\kappa} e \o e+\frac{\g \nu}{\kappa} (e \o f+f \o e)+\g (e\o g+\\g\o e)+\nu f \o f+\kappa (f\o g+g\o f),\\~\hbox{where~} \kappa\neq 0}$
        &$\left\{
            \begin{array}{ll}
             \d_r(e)=\kappa e\o f-\kappa f\o e\hspace{65mm}\\
             \d_r(f)=-\g e\o f+\g f\o e\hspace{63mm}\\
             \makecell[c]{\d_r(g)=\frac{\l \g}{\kappa} e\o e+\frac{\g \nu}{\kappa} (e \o f+f \o e)\\\qquad+\g e\o g+\nu f\o f+\kappa f\o g}\\
            \end{array}
            \right..$\\
            \hline
    \end{tabular}
    \end{center}

 \end{enumerate}
\end{thm}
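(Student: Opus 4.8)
The plan is to reduce the classification to solving, for each of the thirteen three-dimensional Leibniz algebras enumerated in \cite{AOR}, the classical Leibniz Yang--Baxter equation over the symmetric tensors. By Corollary \ref{de:qt} a symmetric $r\in\ll\o\ll$ satisfying the cLYBe Eq.(\ref{eq:cybe}) produces a triangular Leibniz bialgebra $(\ll,[,],r,\d_r)$, and by the very definition of triangularity every such bialgebra arises in this way; hence the triangular structures on a fixed $(\ll,[,])$ are in bijection with the symmetric solutions of Eq.(\ref{eq:cybe}), and each table must record exactly this solution set together with the coproduct read off from Eq.(\ref{eq:cop}). Note that the enumeration of the thirteen algebras itself is imported from \cite{AOR}, so the work lies entirely in the $r$-matrix column. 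Before computing, one may simplify: since $r=\s(r)$, a direct check gives $r^\s_{12}r_{13}=r_{12}r_{13}$ and $r_{13}r^\s_{12}=r_{13}r_{12}$, so Eq.(\ref{eq:cybe}) collapses to
\begin{eqnarray*}
r_{12}r_{23}+r_{13}r_{23}=r_{12}r_{13}+r_{13}r_{12}.
\end{eqnarray*}

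For each algebra with basis $\{e,f,g\}$ I would take the most general symmetric tensor
\begin{eqnarray*}
r=a\,e\o e+b\,f\o f+c\,g\o g+d\,(e\o f+f\o e)+h\,(e\o g+g\o e)+k\,(f\o g+g\o f),
\end{eqnarray*}
with six scalar unknowns, substitute the structure constants of the given bracket into the four products $r_{12}r_{23}$, $r_{13}r_{23}$, $r_{12}r_{13}$, $r_{13}r_{12}$, and expand in the basis of $\ll^{\o 3}$. Matching coefficients turns the displayed equation into a finite system of quadratic equations in $a,b,c,d,h,k$; its complete solution set is precisely the family appearing in the ``Leibniz $r$-matrices'' column. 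Substituting each solution back into Eq.(\ref{eq:cop}) and simplifying with the bracket then produces the matching ``Leibniz coalgebra'' column, and no bialgebra axioms need to be rechecked because Corollary \ref{de:qt} guarantees that every symmetric cLYBe solution already defines a Leibniz bialgebra.

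The main obstacle is \emph{completeness} rather than verification: one must show that every branch of each quadratic system has been exhausted, not merely exhibit valid solutions. For the more degenerate algebras the system decouples into linear conditions and the answer is a single parameter family, but in several cases the genuinely quadratic terms survive and impose relations among the parameters; this is the source of the discriminant-type constraints such as $\nu=\frac{-1\pm\sqrt{1-4\a}}{2}\kappa$ in Table \ref{tab:d3-6} and the golden-ratio conditions $\g=\frac{-1\pm\sqrt{5}}{2}\l$ and $\nu=\frac{1\pm\sqrt{5}}{2}\l$ in Table \ref{tab:d3-10}. Tracking these branches, together with the degenerate specializations in which some unknowns vanish and a family collapses, is where the bookkeeping is heaviest. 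I would therefore organize the proof as thirteen short blocks, one per algebra, each presenting the reduced quadratic system and arguing that its solution set is exactly the tabulated family; the lengthy coefficient expansions are routine and can be cross-checked against the recorded $\d_r$.
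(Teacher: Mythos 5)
Your proposal is correct and coincides with the paper's (implicit) method: the paper omits all computations and simply tabulates, for each of the thirteen algebras taken from \cite{AOR}, the symmetric solutions of the cLYBe together with the coproducts read off from Eq.(\ref{eq:cop}), which is exactly the case-by-case quadratic-system analysis you describe. Your preliminary simplification of Eq.(\ref{eq:cybe}) for symmetric $r$ (namely $r^\s_{12}r_{13}=r_{12}r_{13}$ and $r_{13}r^\s_{12}=r_{13}r_{12}$) is valid, and your emphasis on exhausting all branches of each quadratic system is precisely the completeness claim that the paper asserts but never writes out.
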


 \section{Further research} In \cite{BGM}, we established a bialgebra theory of Rota-Baxter algebras by using dual representations. In this paper, we prove that some Leibniz bialgebras can produce Nijenhuis Leibniz algebras and Nijenhuis Leibniz coalgebras. Is there a suitable Nijenhuis Leibniz bialgebraic structure that precisely derives from Theorem \ref{thm:ln} and Theorem \ref{thm:cln}? Therefore we will try to investigate a bialgebraic structure of a Nijenhuis Leibniz algebra in future \cite{MSun}.

 \section{Acknowledgment} This work is supported by Natural Science Foundation of China (Nos. 12271089, 11871144) and Natural Science Foundation of Henan Province (Nos. 242300421389, 212300410365).

 \bigskip
 \noindent
{\bf Author Contributions} The authors made equal contributions to the paper.

 \bigskip

\noindent
{\bf Competing interests} The authors claim that there is no conflict of interests.
\bigskip

\end{document}